\documentclass[11pt,a4paper]{amsart}
\usepackage{amsmath,amsfonts,amsthm,amssymb}
\usepackage[T1]{fontenc}
\usepackage{color}
\usepackage{epsfig}
\usepackage{graphicx}
\usepackage{hyperref}
\usepackage[capitalise]{cleveref}
\usepackage{xspace}
\usepackage{enumitem}
\usepackage[left=1in,right=1in,top=1in,bottom=1in,foot=0.5in]{geometry}
\usepackage{thm-restate}
\usepackage{soul}
\usepackage[square,sort,comma,numbers]{natbib}
\usepackage{etoolbox}

\usepackage{tikz}
\usepackage{tkz-euclide}
\usetikzlibrary{positioning,calc}
\usetikzlibrary{decorations.pathmorphing, arrows.meta}
\usetikzlibrary{shapes.geometric}
\tikzset{harp/.style={arrows = {[harpoon]<->[harpoon]}}} % left right edge
\usepackage{color} % custom colors
\usepackage{standalone}

\usepackage{subcaption}

\usepackage{amsthm}
\newtheorem{theorem}{Theorem}

\newtheorem{lemma}{Lemma}
\newtheorem{corollary}{Corollary}

\theoremstyle{definition}
\newtheorem{definition}{Definition}

\AtBeginEnvironment{proof}{\setcounter{case}{0}}%make sure the case numbers are reset within proofs
\newtheorem{case-int}{Case}
\newtheorem{case-pos}{Case}
\newtheorem{case-kk}{Case}
\newtheorem{case-kk+1}{Case}
\newtheorem{case-L4}{Case}
\newtheorem{case-L5}{Case}
\newtheorem{case-L6}{Case}
\newtheorem{case-int1}{Case}
\newtheorem{case-int2}{Case}
\newtheorem{case-int3}{Case}
\newtheorem{subcase-kkk}{Subcase}
\newtheorem{subcase-kkk+1}{Subcase}
\newtheorem{subcase-x|y}{Subcase}
\newtheorem{subcase-x|k}{Subcase}
\newtheorem{subcase-ext}{Subcase}
\newtheorem{subsubcase-x|k}{Subsubcase}
\newtheorem{subsubcase-k|x}{Subsubcase}
\newtheorem{claim-swm}{Claim}
\newtheorem{claim-sswm}{Claim}

\numberwithin{equation}{section}

\DeclareMathOperator{\conv}{conv}

\newcommand{\coloneqq}{\mathrel{:=}}

\title{Smooth weakly modular graphs}

\author[V.\ Chepoi, B.J.\ Schmidt, and P.F.\ Stadler]{Victor
  Chepoi$^{1,2}$, Bruno J.\ Schmidt$^{3,4}$, and
  Peter F.\ Stadler$^{3,4,5,6,7}$}

\begin{document}

\maketitle

\vspace*{-0.2cm}
%	\medskip
\centerline{$^{1}$LIS, Aix-Marseille Universit\'e, CNRS, and Universit\'e
  de Toulon} \centerline{Facult{\'e} des Sciences de Luminy, F-13288
  Marseille Cedex 9, France} \centerline{\textsf{victor.chepoi@lis-lab.fr}}
\medskip
\centerline{$^{2}$ Institut Universitaire de France (IUF)}

\medskip
\centerline{$^{3}$ Max Planck Institute for Mathematics in the Sciences,}
\centerline{Inselstra{\ss}e 22, D-04103 Leipzig, Germany}

\medskip
\centerline{$^{4}$ Bioinformatics Group, Department of Computer Science \&
  Interdisciplinary Center}
\centerline{for Bioinformatics, Leipzig University, H{\"a}rtelstraße
  16--18, D-04107 Leipzig, Germany}
\centerline{\sf{bruno@bioinf.uni-leipzig.de}}

\medskip
\centerline{$^{5}$ Department of Theoretical Chemistry, University of Vienna,}
\centerline{W{\"a}hringerstra{\ss}e 17, A-1090 Wien, Austria}

\medskip \centerline{$^{6}$ Facultad de Ciencias, Universidad National de
  Colombia, Bogot{\'a}, Colombia}

\medskip \centerline{$^7$ Santa Fe Institute, 1399 Hyde Park Rd., Santa Fe,
  NM 87501, USA} \centerline{\textsf{studla@bioinf.uni-leipzig.de}}

% \iffalse

%   \author{Victor Chepoi} \address[V.~Chepoi]{Aix-Marseille Universit\'e
%   and CNRS, LIS, Marseille, France}

% \address{Institut Universitaire de France}
% \email{victor.chepoi@lis-lab.fr}

% \author{Bruno J. Schmidt}
% \address[B.~Schmidt]{Max Planck Institute for Mathematics in the Sciences, Inselstraße 22,
% D-04103 Leipzig, Germany}
% \address{Bioinformatics Group, Department of Computer Science \&
% Interdisciplinary Center for Bioinformatics, Leipzig University,
% H\"artelstraße 16–18, D-04107 Leipzig, Germany}
% \email{bruno@bioinf.uni-leipzig.de}

% \author{Peter Stadler}
% \address[P.~Stadler]{Max Planck Institute for Mathematics in the Sciences, Inselstraße 22,
% D-04103 Leipzig, Germany}
% \address{Bioinformatics Group, Department of Computer Science \&
% Interdisciplinary Center for Bioinformatics, Leipzig University,
% H\"artelstraße 16–18, D-04107 Leipzig, Germany}
% \address{Department of Theoretical Chemistry, University of Vienna,
% W\"ahringerstraße 17, A-1090 Wien, Austria}
% \address{Facultad de Ciencias, Universidad National de Colombia, Bogot\'{a},
% Colombia}
% \address{Santa Fe Institute, 1399 Hyde Park Rd., Santa Fe, NM 87501, USA}
% \email{studla@bioinf.uni-leipzig.de}
% \fi

%\begin{document}

\begin{abstract}
  A graph $G=(V,E)$ is called \emph{smooth} (respectively, \emph{strongly
    smooth}) if for any two vertices $u,v\in V$, the distance point-shadow
  $v|u \coloneqq \{ x\in V: d(u,x)=d(u,v)+d(v,x)\}$, respectively, the
  point-shadow $v/u \coloneqq \{ x\in V: v\in \conv(u,x)\}$, is
  geodesically convex.  Smooth graphs have been introduced by Nebesk{\'y}
  (2005) in the context of step systems. Graphs with convex point-shadows
  and convex distance point-shadows also naturally occur in convexity
  theory. Bre\v{s}ar et al. (2026) recently showed that several classes of
  graphs are smooth and that smoothness is preserved by Cartesian products,
  gated amalgams, and isometric subgraphs. Weakly modular graphs comprise
  the most important classes of graphs from Metric Graph Theory: median,
  modular, Helly, bridged, and dual polar graphs.
  
  In this note, we characterize smooth and strongly smooth weakly modular
  graphs in terms of forbidden isometric subgraphs on 5 and 7
  vertices. This settles Problem 1 of the paper by Bre\v{s}ar et al. We
  also characterize prime strongly smooth weakly modular graphs, i.e.,
  strongly smooth weakly modular graphs that cannot be obtained from
  smaller graphs by Cartesian products and gated amalgams.

\end{abstract}

\maketitle

\section{Introduction}

Smooth graphs have been introduced by Nebesk\'y \cite{Ne} in the context of
step systems in graphs \cite{Ne_step}. They have been studied in depth by
Bre\v{s}ar, Changat, Narasimha-Shenoi, Schmidt, and Stadler in
\cite{BrChNSScSt}, where several classes of graphs have been shown to be
smooth. Geometrically, \emph{smooth graphs} can be defined as the graphs
$G=(V,E)$ in which the distance point-shadows
$v|u \coloneqq \{ x\in V: v\in I(u,x)\}$ are convex sets of $G$, where
$I(p,q) \coloneqq \{ w\in V: d(p,w)+d(w,q)=d(p,q)\}$ denotes geodesic
intervals.  A related (but different) notion of shadow
$u/v\coloneqq \{ x\in V: v\in \conv(u,x)\}$ -- and, more generally,
$A/B=\{ x\in V: \conv(B\cup \{ x\})\cap A\ne\varnothing\}$ -- concerns
separation by halfspaces in convexity theory. Here $\conv(A)$ denotes the
convex hull of the vertex set $A$.  The convexity of shadows of the form
$A/B$ or $x/B$ characterizes the $S_4$- and $S_3$-separability by
halfspaces in abstract convexity spaces \cite{Ch_domain,ChS4,ChS3} and in
graphs \cite{Ch_bipartite,ChS4,ChS3} (see \cite{vdV} for a presentation of
some of those results).  Then replacing distance point-shadows $v|u$ by
point shadows $v/u$ leads to a notion of \emph{strongly smooth} graphs.
Weakly modular graphs have been introduced in \cite{BaCh_helly,Ch_metric}
as the graphs satisfying two metric conditions: the triangle and the
quadrangle conditions. They provide a common generalization of many classes
of graphs defined by distance properties, in particular, median, modular,
Helly, bridged, and dual polar graphs; see \cite{BaCh_survey} for a survey
and \cite{CCHO} for a unifying theory. The main result of this note is a
characterization of smooth and strongly smooth weakly modular graphs in
terms of forbidden isometric subgraphs on 5 and 7 vertices, answering
Problem 1 of \cite{BrChNSScSt}.
  
Bridged graphs are the graphs satisfying one of the basic properties of
convexity in Euclidean spaces: the neighborhoods around convex sets are
convex. Bridged graphs are exactly the graphs in which all isometric cycles
have length 3 \cite{FaJa,SoCh}. Topologically, they are characterized as
the graphs whose clique complexes are simply connected and do not contain
induced 4-cycles and 5-cycles \cite{Ch_CAT}. Such simplicial complexes have
been rediscovered in \cite{JaSw} and called systolic complexes. Bridged
graphs and systolic complexes have been intensively studied in geometric
group theory, where they are considered as simplicial complexes with
combinatorial nonpositive curvature. With this motivation, weakly systolic
complexes and their 1-skeleta---weakly bridged graphs---have been
introduced in \cite{Os}.  They are exactly the weakly modular graphs with
convex balls, or, equivalently, those without induced 4-cycles \cite{ChOs}.
We will show below that smoothness and strong smoothness are equivalent for
weakly bridged graphs; moreover, we shall describe a characterization in
terms of two forbidden isometric subgraphs.

A method of studying weakly modular graphs is to investigate how they are
obtained from simpler graphs by Cartesian products and gated
amalgamations. Given a class of weakly modular graphs closed by these
operations, a central problem is to characterize the prime graphs in the
sense that are neither a Cartesian product nor a gated amalgamation of
smaller graphs in the same class.  The prime graphs are known for median
graphs \cite{Is}, quasi-median graphs \cite{BaMuWi}, weakly median graphs
\cite{BaCh_weak}, and bucolic graphs \cite{BrChChGoOs}, all of which are
free of induced $K_{2,3}$ and $W^-_4$ graphs. The weakly modular graphs
free of induced $K_{2,3}$ and $W^-_4$ are the \emph{pre-median
  graphs}. Every pre-median graph $G$ isometrically embeds into the
Cartesian product of its primes and, under some conditions on the primes,
$G$ is even a retract of this product \cite{Cha1,Cha2}. Chalopin et al.\
\cite{CCHO} characterized prime pre-median graphs as the 2-connected
pre-median graphs whose clique complexes are simply connected. Combining
this result with our characterization we obtain a characterization of
primes of strongly smooth weakly modular graphs.

\section{Preliminaries}

\subsection{Basic notions and notations} All graphs $G=(V,E)$ considered in
this note are undirected, connected, and contain neither multiple edges,
nor loops but are not necessarily finite. For two vertices $v,w\in V$ we
write $v\sim w$ when there is an edge connecting $v$ with $w$ and
$v\nsim w$ otherwise.  The subgraph of $G$ \emph{induced by} a set
$A\subseteq V$ is the graph $G[A]=(A,E')$ such that $uv\in E'$ if and only
if $u,v\in A$ and $uv\in E$. A \emph{square} (respectively,
\emph{triangle}) is an induced 4-cycle $v_1v_2v_3v_4$ (respectively, a
3-cycle $v_1v_2v_3$) of $G$. The \emph{length} of a $(u,v)$-path $P$ is the
number of edges on $P$. A \emph{$(u,v)$--geodesic} (or a
\emph{$(u,v)$--shortest path}) is an $(u,v)$-path of smallest length. The
\emph{distance} $d(u,v)=d_G(u,v)$ between $u$ and $v$ is the length of a
$(u,v)$--geodesic. An \emph{isometric embedding} of a graph $H=(A,F)$ into
a graph $G=(V,E)$ is a map $f:A \rightarrow V$ such that
$d_G(f(x),f(y))=d_H(x,y)$ for any $x,y\in A$. The image of $H$ under an
isometric embedding is called an \emph{isometric subgraph} of $G$. Each
isometric subgraph is an induced subgraph.

The \emph{interval} $I(u,v)$ between $u$ and $v$ consists of all vertices
on $(u,v)$--geodesics, that is, of all vertices \emph{between} $u$ and $v$:
$I(u,v)\coloneqq \{x\in V: d(u,x)+d(x,v)=d(u,v)\}$. An induced subgraph of
$G$ (or the corresponding vertex-set $A$) is called \emph{convex} (or
\emph{geodesically convex}) if it includes the interval of $G$ between any
pair of its vertices. By $\conv(S)$ we denote the \emph{convex hull} of $S$
which is the minimal convex set that contains $S$. A set of vertices $A$ is
called \emph{locally convex} if the subgraph $H=G[A]$ induced by $A$ is
connected and for any $x,y\in A$ with $d_H(x,y)=2$ the interval $I(x,y)$ is
included in $A$.  We say that a graph $G$ is a \emph{graph with convex
  intervals} if for any two vertices $u,v$, the interval $I(u,v)$ is a
convex set of $G$.

For a vertex $s$ of $G$ and an integer $r\ge 1$, we will denote by
$B_r(s)\coloneqq \{ x\in V: d(s,x)\le r\}$ the \emph{ball} in $G$ of radius
$r$ centered at $s$, and the subgraph induced by this ball. More generally,
the $r$--\emph{ball around a set} $S\subseteq V$ is the set (or the
subgraph induced by) $B_r(S)\coloneqq \{ v\in V: d(v,S)\le r\},$ where
$d(v,S)=\min \{ d(v,s): s\in S\}$.  Similarly, the \emph{sphere} centered
at $s$ of radius $r$ is the set (or the subgraph induced by)
$S_r(s)\coloneqq \{ x \in V: d(s,x) = r\}$.  As usual,
$N(s)\coloneqq S_1(s)$ denotes the set of neighbors of a vertex $s$ and
$N[s]=B_1(s)$.

An induced subgraph $H$ of a graph $G$ is \emph{gated} \cite{DrSch} if for
every vertex $x$ outside $H$ there exists a vertex $x'$ in $H$ (the
\emph{gate} of $x$) such that $x'\in I(x,y)$ for any $y$ of $H$.  Gated
sets are convex. A graph $G$ is a \emph{gated amalgamation} of graphs $G_1$
and $G_2$ if $G_1$ and $G_2$ are (isomorphic to) two intersecting gated
subgraphs of $G$ whose union is $G.$ A graph $G$ with at least two vertices
is called \emph{prime} if it is neither a Cartesian product nor a gated
amalgamation of smaller graphs.

\subsection{Smooth and strongly smooth graphs}

For two vertices $u,v$ of a graph $G$, the \emph{point-shadow} \cite{ChS4}
of $v$ with respect to $u$ is the set
$v/u\coloneqq \{x\in V : v\in \conv(u,x)\}$.  Analogously, the
\emph{distance point-shadow} of $v$ with respect to $u$ is the set
$v|u\coloneqq \{x\in V: v\in I(u,x)\}$. In general, $v|u\subseteq v/u$ and
$v|u\ne v/u$.  We continue with the main definitions of this note:
\begin{definition}[Smooth graphs]
  A graph $G=(V,E)$ is called \emph{smooth} \cite{BrChNSScSt} if $G$
  satisfies the following equivalent conditions:
  \begin{enumerate}
  \item[(1)] for all vertices $u,v,w,x,y\in V$ such that
    $v\in I(u,x)\cap I(u,y)$ and $w\in I(x,y)$, we have $v\in I(u,w)$.
  \item[(2)] for all vertices $u,v\in V$, the distance point-shadow $v|u$
    is convex.
  \item[(3)] for any two adjacent vertices $u,v$, the distance point-shadow
    $v|u$ is convex.
  \end{enumerate}
\end{definition}
Property (2) suggests to also consider an analogous notion using the
point-shadow instead of the distance point-shadow:
\begin{definition}[Strongly smooth graphs]
  A graph $G$ is called \emph{strongly smooth} if for all vertices
  $u,v\in V$, the  point-shadow $v/u$  is convex.
\end{definition}

The term ``strongly smooth graph'' is justified by the following result
(see also \cite[Lemma 13]{BrChNSScSt} for a different proof):
\begin{lemma}\label{strongly-smooth->smooth} If $G$ is a strongly smooth
  graph, then $G$ is a graph with convex intervals. Consequently, strongly
  smooth graphs are smooth.
\end{lemma}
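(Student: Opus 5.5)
We prove the first claim directly and then derive smoothness from it via condition (2) of the definition of smooth graphs. Fix vertices $u,v$ and suppose, towards a contradiction, that $I(u,v)$ is not convex. Then there exist $x,y\in I(u,v)$ and $w\in I(x,y)$ with $w\notin I(u,v)$. The idea is to find a point-shadow $p/q$ that contains $x$ and $y$ but not $w$. Such a shadow is not convex, contradicting strong smoothness.

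The key observation is that convex hulls of pairs already lying in an interval stay inside that interval. For any $z$, the set $I(u,z)$ is contained in $\conv(u,z)$. Since $v\notin\conv(u,w)$ would put $w$ outside $v/u$, the natural shadow to try is $u/v$, i.e. $\{z: u\in\conv(v,z)\}$.

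The direct choice does not work, so we modify it. We have $u\in I(v,x)\subseteq\conv(v,x)$ only if $x=u$, so the direct choice fails. Instead we reduce to a minimal counterexample, taking $d(u,v)$ minimal among all non-convex intervals. Choose $x,y\in I(u,v)$ and $w\in I(x,y)\setminus I(u,v)$ with $d(x,y)$ minimal.

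With this minimal configuration we use the shadow $v/u$. Every $z\in I(u,v)$ with $z\neq u$ satisfies $v|u\ni v$, so that does not directly help. The cleaner route is the shadow $x/v$ with $x$ in the interval. Note that $I(x,v)\subseteq I(u,v)$, and by minimality (when $x\neq u$) $I(x,v)$ is convex. The plan is to show:
\begin{itemize}
\item[(i)] $u\in x/v$, since $x\in I(v,u)\subseteq\conv(v,u)$;
\item[(ii)] similarly $y\in x/v$ requires $x\in\conv(v,y)$, which generally fails.
\end{itemize}
So we instead use the shadow $v/u$ for the outer endpoints. For every $z\in I(u,v)$ we have $z\in$ the shadow $\{t: \conv(u,t)\ni z\}$. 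We therefore consider the simplest case, $d(u,v)$ minimal with $x=u$. Then we need $y\in I(u,v)$, $w\in I(u,y)$, and hence $w\in I(u,v)$, because intervals of subpaths of geodesics are nested. This shows that a failure of convexity must occur with both $x$ and $y$ different from the endpoints.

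The core step, which I expect to be the main obstacle, is the following. Take $x,y\in I(u,v)$ and $w\in I(x,y)$, and show that $w\in\conv(u,v)$ forces $w\in I(u,v)$ under strong smoothness. I would argue this with the shadow $u/w$. Suppose $w\notin I(u,v)$. Look at $S=\{t:\ w\in\conv(u,t)\}=w/u$. It contains $x$ or else $y$ when $w$ lies on geodesics toward them, and strong smoothness makes $S$ convex.

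Next compare $d(u,w)$ with $\max(d(u,x),d(u,y))$. Pick the endpoint, say $y$, with $d(u,w)\le d(u,y)$, and apply induction on $d(u,y)$ to the interval $I(u,y)$. Here $y$ plays the role of $v$. This should give $w\in I(u,y)\subseteq I(u,v)$, which is a contradiction.

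Finally, once intervals are convex, $\conv(u,x)=I(u,x)$ for all $u,x$. Hence $v/u=v|u$ for all $u,v$, so every distance point-shadow is convex, and $G$ is smooth by condition (2).
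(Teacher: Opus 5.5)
Your final step is correct: convex intervals give $\conv(u,x)=I(u,x)$, so $v/u=v|u$ and smoothness follows from condition (2). So is your remark that $x=u$ cannot occur in a counterexample. The gap is in the core step.

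You claim that $w/u$ ``contains $x$ or else $y$ when $w$ lies on geodesics toward them.'' But if, say, $w\in I(u,x)$, then $w\in I(u,x)\subseteq I(u,v)$ because $x\in I(u,v)$. So in a counterexample this condition never holds, and nothing you have said places $x$ or $y$ in $w/u$.

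The induction does not close either. Minimality of $d(u,v)$ only tells you that $I(u,y)$ is convex, and only if $y\ne v$. That says nothing about $w$ unless you already know $w\in\conv(u,y)$, i.e. $y\in w/u$, which is exactly the missing fact. Moreover, an endpoint with $d(u,w)\le d(u,y)$ need not exist. Take $K_{2,3}$ with parts $\{u,w,v\}$ and $\{x,y\}$: then $x,y\in I(u,v)$ and $w\in I(x,y)\setminus I(u,v)$, yet $d(u,w)=2>1=d(u,x)=d(u,y)$.

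The missing idea is to use convexity of $w/u$ through two members you do know: $w$ itself, and $v$ (since $w\in I(x,y)\subseteq\conv(u,v)$). You then want a vertex between them that cannot belong to the shadow.

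\begin{itemize}
\item First normalize. Replace $w$ by the first vertex outside $I(u,v)$ on an $(x,y)$-geodesic through $w$, and $x$ by its predecessor. Now you may assume $w\sim x$.
\item Since $w\notin I(u,v)$, you get $d(u,w)\ge d(u,x)$ and $d(v,w)\ge d(v,x)$, with at least one inequality strict. Say the second is strict (otherwise swap $u$ and $v$).
\item Then $x\in I(w,v)$, so convexity of $w/u$ gives $x\in w/u$, i.e. $w\in\conv(u,x)$.
\item Since $x\ne v$ (by your nested-interval remark), $d(u,x)<d(u,v)$, and minimality gives $\conv(u,x)=I(u,x)$. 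This contradicts $d(u,w)\ge d(u,x)$.
\end{itemize}

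This is the paper's argument. The non-convex shadow is not one that contains $x,y$ but misses $w$. It is $w/u$, which contains $w$ and $v$ but cannot contain the intermediate vertex $x\in I(w,v)$.
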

\begin{proof} Let $G$ be a strongly smooth graph and suppose that $G$
  contains a non-convex interval. Let $I(u,v)$ be a non-convex interval of
  $G$ with minimal $k=d(u,v)$. Since $I(u,v)$ is not convex, there exist
  two vertices $x,y\in I(u,v)$ and a vertex $z\in I(x,y)$ adjacent to $x$
  and not belonging to $I(u,v)$.  Then clearly $x$ is different from each
  of the vertices $u,v$. Furthermore, $d(z,u)\ge d(x,u)$ and
  $d(z,v)\ge d(x,v)$ and at least one of these inequalities, say the
  second, is strict. Then $d(z,v)>d(x,v)$ and $z\sim x$, implies that
  $x\in I(z,v)$.  Since $z\in \conv(u,v)$, the vertex $v$ belongs to the
  shadow $z/u$. Since $z\in z/u$ and $x\in I(z,v)$, from convexity of the
  shadow $z/u$ we conclude that $x\in z/u$. This implies that
  $z\in \conv(x,u)$. From the minimality choice of $d(u,v)$ it follows that
  the interval $I(u,x)$ is convex, whence $\conv(x,u)=I(x,u)$. Since
  $x\in I(z,u)$, necessarily $z\notin I(x,u)$, yielding $x\notin z/u$, a
  contradiction. This establishes that strongly smooth graphs have convex
  intervals.  Consequently, in strongly smooth graphs, the point-shadows
  coincide with distance point-shadows. Strongly smooth graphs therefore
  are smooth.
\end{proof}
As noted in \cite{BrChNSScSt}, the graph $W_4^-$ in Fig.~\ref{f:W4-} is
smooth.  However, $W_4^-$ is not strongly smooth, since the shadow $v/u$ is
not convex.

\begin{figure}[ht]
  \centering

  \begin{subfigure}[t]{0.3\textwidth}
    \centering
    \resizebox{\linewidth}{!}{
      \begin{tikzpicture}[every node/.style={circle, draw, fill=white, inner sep=1.5pt}]
        \node (u) at (-1,1.5) {$u$};
        \node (v) at (1,1.5)  {$v$};
        \node (x) at (1,3.5)  {$x$};
        \node (y) at (1,-0.5) {$y$};
        \node (w) at (3,1.5)  {$w$};
        \draw (u) -- (x) -- (w) -- (y) -- (u);
        \draw (u) -- (v) -- (x);
        \draw (v) -- (y);
      \end{tikzpicture}}
    \caption{}
  \end{subfigure}%
  \caption{The graph $W_4^-$.}
  \label{f:W4-}
\end{figure}

By definition of shadows, $v\in v|u$. Moreover, $x\in v|u$ and $y\in I(v,x)$
  implies $y\in v|u\subseteq v/u$. As an immediate consequence we have: 
\begin{lemma}
  \label{shadow-connected}
  For any two vertices $u,v$ of a graph $G$, both the shadow $v/u$ and the
  distance point-shadow $v|u$ induce nonempty connected subgraphs of $G$.
\end{lemma}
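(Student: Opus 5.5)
Both shadows contain $v$, so they are nonempty: $v\in I(u,v)$ gives $v\in v|u$, and $v\in\conv(u,v)$ gives $v\in v/u$. For connectivity, the plan is to show that every vertex $x$ of either shadow is joined to $v$ by a path lying entirely inside that shadow. The natural path to use is a $(v,x)$-geodesic.

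For the distance point-shadow, take $x\in v|u$, so $v\in I(u,x)$, and let $P$ be any $(v,x)$-geodesic. Each vertex $y$ of $P$ lies in $I(v,x)$. Then
\[
d(u,y)+d(y,x)\le d(u,v)+d(v,y)+d(y,x)=d(u,v)+d(v,x)=d(u,x),
\]
so equality holds throughout, and therefore $d(u,y)=d(u,v)+d(v,y)$, i.e.\ $v\in I(u,y)$. Thus $y\in v|u$, which is exactly the remark stated before the lemma. Hence $P$ lies in $v|u$ and connects $x$ to $v$, so $v|u$ induces a connected subgraph.

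For the point-shadow, take $x\in v/u$, so $v\in\conv(u,x)$. Here a geodesic is not the right tool: pick $y\in I(v,x)$, and $v\in\conv(u,y)$ need not hold. The key step is to use a path inside $\conv(u,x)$ rather than a geodesic from $v$ to $x$. Set $C=\conv(u,x)$ and, for each $z\in C$, ask whether $v\in\conv(u,z)$.

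The obstacle is that the convex hull $\conv(u,x)$ is built iteratively, $C_0=\{u,x\}$ and $C_{i+1}=\bigcup_{a,b\in C_i}I(a,b)$, and $v$ may first appear at a late stage. The plan is to induct on the stage at which $v$ appears, building a chain of vertices whose hulls with $u$ contain $v$ and which are connected to one another inside $v/u$. I expect this induction to be the main difficulty.

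Before that, one simpler route is worth checking: $v/u$ might be a union of connected sets that all contain $v$. Concretely, for $x\in v/u$ and $z\in C$ with $v\in\conv(u,z)$, we have $\conv(u,z)\subseteq C$. If every vertex of $I(v,x)$ has a hull with $u$ containing $v$, a geodesic works again, but that is exactly what can fail. So the claim should use the set
\[
\{z : \conv(u,z)\ni v\}\cap C,
\]
and the aim is to show this set is connected through $v$. If that direct argument does not go through, the fallback is the iterative description of $C$ above.
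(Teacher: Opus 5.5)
Your nonemptiness argument and your treatment of $v|u$ are correct and match the paper's reasoning: for $x\in v|u$, every $(v,x)$-geodesic stays inside $v|u$.

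The genuine gap is the point-shadow $v/u$. There you only outline an induction on the stage at which $v$ enters $\conv(u,x)$, and you never carry it out. It cannot be completed, because the assertion is false for $v/u$ in general.

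Take $W_4^-$ labelled as in Fig.~\ref{f:W4-}.
\begin{itemize}
\item The hulls $\conv(u,x)=\{u,x\}$ and $\conv(u,y)=\{u,y\}$ are edges not containing $v$.
\item $u\notin v/u$, since $\conv(u,u)=\{u\}$.
\item On the other hand, $I(u,w)=\{u,x,y,w\}$ and $v\in I(x,y)$, so $\conv(u,w)=V$.
\end{itemize}
Hence $v/u=\{v,w\}$ with $v\nsim w$, which induces two isolated vertices. Similarly, in $K_{2,3}$, if $u$ and $v$ are two of the three vertices of degree $2$, then $v/u$ consists of $v$ and the third vertex of degree $2$.

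This is exactly the obstacle you identified. The vertex $v$ enters $\conv(u,w)$ only at the second stage, through $I(x,y)$. The intermediate vertices $x,y$ do not lie in $v/u$, so the chain of vertices you hoped to build inside $v/u$ does not exist.

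The paper's own one-line justification has the same limitation. It shows that $v|u$ is a connected subset of $v/u$ containing $v$. That proves connectivity of $v|u$ but says nothing about the rest of $v/u$.

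Only the $v|u$ half is actually used in the paper, in the proof of Theorem~\ref{t:wm-smooth}. In graphs with convex intervals one has $\conv(u,x)=I(u,x)$, hence $v/u=v|u$, so there your geodesic argument covers both sets. The right repair is therefore to restrict the claim about $v/u$ to such graphs, or drop it, rather than look for a harder proof.
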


\subsection{Weakly modular graphs} We continue with the
definition of weakly modular graphs:
\begin{definition} [Weakly modular graphs]
  A graph $G=(V,E)$ is called \emph{weakly modular}
  \cite{BaCh_helly,Ch_metric} if $G$ satisfies the following Triangle and
  Quadrangle Conditions:
\begin{enumerate}
\item[(TC)] Triangle Condition: for any  $u,v,s \in V$ with
  $d(u,v)=1$ and $d(u,s)=d(v,s)$, there exists a common neighbor $x$
  of $u$ and $v$ such that $d(s,x)=d(s,u)-1$.
\item[(QC)] Quadrangle Condition: for any $u,v,w,s\in V$ with
  $d(u,w)=d(v,w)=1$, $d(u,v) = 2$, and $d(s,u)=d(s,v)=d(s,w)-1,$ there
  exists a common neighbor $x$ of $u$ and $v$ such that
  $d(s,x)=d(s,u)-1$.
\end{enumerate}
\end{definition}

An important property of weakly modular graphs is that local convexity
implies global convexity (for a generalization of this result to the larger
class of meshed graphs, see \cite{ChS3}).
\begin{lemma} \upshape{\cite{Ch_metric,ChS4}}
  \label{thm:local-convex->global-convex}
  A connected subgraph $H$ of a weakly modular graph $G$ is convex if and
  only if $H$ is locally convex.
\end{lemma}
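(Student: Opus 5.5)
One direction is immediate: every convex set $H$ is locally convex. Since $H$ is connected by hypothesis, for $x,y\in H$ with $d_H(x,y)=2$ we have $d_G(x,y)\le 2$, and convexity of $H$ gives $I(x,y)\subseteq H$. (If $d_G(x,y)=1$, then because $H$ is convex it is an induced subgraph, so $xy$ would be an edge of $H$ and $d_H(x,y)=1$, a contradiction.) The substance of the proof is the converse: a locally convex connected $H$ of a weakly modular graph $G$ is convex.

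We first show that $H$ is isometric, i.e., $d_H=d_G$ on $H$. Suppose not, and take $x,y\in H$ with $d_H(x,y)>d_G(x,y)$, choosing the pair to minimize $d_G(x,y)=k$. If $k\le 2$, local convexity already gives a contradiction: $k=1$ is impossible because locally convex sets are induced, and $k=2$ is handled by taking an $(x,y)$-path in $H$ and shortening it. More generally we argue by a ``lowest bad path'' method. Among $(x,y)$-paths in $H$, pick one, $P$, minimizing the lexicographic profile of distances to $y$ (that is, $\sum_{p\in P} d_G(p,y)$ together with the length). Let $p$ be a vertex of $P$ at maximal distance $r$ from $y$, with neighbors $p^-,p^+$ on $P$. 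If both neighbors are at distance $r-1$ from $y$, then $d(p^-,p^+)\le 2$. If $p^-=p^+$ or $p^-\sim p^+$, the path shortens. Otherwise $d_H(p^-,p^+)=2$, and (QC) applied to $p^-,p^+,p$ with base $y$ yields a common neighbor $p'$ of $p^-,p^+$ with $d(p',y)=r-2$. Then $p'\in I(p^-,p^+)\subseteq H$, and replacing $p$ by $p'$ lowers the profile. If instead a neighbor is at distance $r$ (a ``horizontal'' edge $pp^+$), we use (TC) on $p,p^+$ with base $y$ to obtain a common neighbor at distance $r-1$. This vertex lies in an interval of length $\le 2$ between vertices of $H$ (e.g.\ $I(p^-,\cdot)$ when the distances allow), or otherwise we combine (TC) and (QC) as in the standard proof that weakly modular graphs have the ``downward'' property. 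The process terminates only when $P$ is a geodesic, which contradicts the choice of the pair.

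With isometry in hand, convexity follows by induction on $d(x,y)$: if $z\in I(x,y)$ with $z\notin H$, take a neighbor $x'$ of $x$ on a geodesic of $H$ toward $y$ (available since $H$ is isometric) and a neighbor $z_1\in I(x,y)$ of $x$ on the geodesic through $z$. We then use (TC)/(QC) relative to base $y$ to show $z_1\in I(x,x'')$ for some vertices at distance $\le 2$ inside $H$, hence $z_1\in H$, and apply induction to $z_1,y$. Rather than running this as a separate step, it is cleaner to fold it into the same lowest-path argument applied to the path $x\to z\to y$ concatenated with an $H$-geodesic back. Concretely, one shows that every geodesic between vertices of $H$ can be transformed into an $H$-path while keeping its vertices pointwise fixed, using local convexity at each (QC)/(TC) step.

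I expect the main obstacle to be the horizontal-edge case in the path-lowering argument. There the triangle condition produces a new vertex, but it is not obviously inside an interval of radius 2 of vertices of $H$; one must choose the configuration carefully (e.g.\ pick $p$ as the first vertex attaining the maximum and treat a maximal horizontal run) so that local convexity applies. I would handle this by first eliminating horizontal runs: on a run $p_1p_2\dots p_m$ at level $r$ with $p_0,p_{m+1}$ at level $r-1$, we have $d(p_0,p_2)\le2$, so either $p_1$ can be skipped or $I(p_0,p_2)\subseteq H$ supplies, via (TC), a lower replacement vertex.
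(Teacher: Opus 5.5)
The paper does not prove this lemma; it only cites \cite{Ch_metric,ChS4}, so your argument has to stand on its own. The easy direction and the peak case of your path-lowering are fine. There is, however, a genuine gap at the step you yourself call the main obstacle: it is never carried out, and the remedy you suggest does not work as stated.

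Measure levels by $d(\cdot,y)$, and take $p_0$ at level $r-1$ and $p_1,p_2$ at level $r$, with $p_0\nsim p_2$. You need a vertex of $I(p_0,p_2)\subseteq H$ at level $r-1$. But (TC) applied to $p_1p_2$ with base $y$ only gives some $w\sim p_1,p_2$ at level $r-1$, and nothing forces $w\sim p_0$. So $w$ need not lie in $I(p_0,p_2)$, or in $H$ at all. What is missing is the lemma that two vertices $a,c$ with $d(a,c)=2$, $d(y,a)=r-1$, $d(y,c)=r$ have a common neighbor at level $r-1$. It takes three steps:
\begin{enumerate}
\item[(i)] If $w\sim a$, take $t=w$. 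Otherwise apply (QC) to $a,w$ and their common neighbor $p_1$ to get $q\sim a,w$ at level $r-2$.
\item[(ii)] Then $d(c,q)=d(c,a)=2$.
\item[(iii)] (TC) with base $c$ on the edge $aq$ gives $t\sim a,q$ with $t\sim c$. Hence $d(y,t)=r-1$ and $t\in I(a,c)\subseteq H$.
\end{enumerate}
With this lemma you can drop the max-level bookkeeping, which as written also ignores runs starting at $x$. At the index $j$ where the final geodesic segment of $P$ begins, $p_{j-1}p_jp_{j+1}$ is either a peak or a horizontal-then-down configuration, and in both cases $\sum_i d(p_i,y)$ strictly drops.

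Your convexity paragraph has a similar hole. The auxiliary vertex (your $x''$) lies in $H$ only because of an induction hypothesis $I(x',y)\subseteq H$ that you never state. The sentence about transforming geodesics into $H$-paths ``keeping vertices pointwise fixed'' just restates the goal.

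Both problems disappear if you induct on $n=d_H(x,y)$ and prove simultaneously that $d_G(x,y)=n$ and $I(x,y)\subseteq H$. Let $x'$ be the neighbor of $x$ on an $H$-geodesic to $y$; by induction $d_G(x',y)=n-1$ and $I(x',y)\subseteq H$.
\begin{enumerate}
\item[(i)] Isometry. If $d_G(x,y)=n-2$, then $x\in I(x',y)$. If $d_G(x,y)=n-1$, then (TC) gives $s\sim x,x'$ with $s\in I(x',y)$. Either way, a $G$-geodesic to $y$ inside $I(x',y)\subseteq H$ gives $d_H(x,y)<n$, a contradiction.
\item[(ii)] Convexity. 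For $z\in I(x,y)$, let $z_1$ be the neighbor of $x$ on a geodesic from $x$ through $z$ (the case $z_1=x'$ is immediate). Otherwise (TC) or (QC) applied to $z_1,x'$ with base $y$ gives $s\sim z_1,x'$ in $I(x',y)\subseteq H$. Since $d_H(x,s)=2$, local convexity yields $z_1\in I(x,s)\subseteq H$ with $d_H(z_1,y)=n-1$. Hence $z\in I(z_1,y)\subseteq H$ by induction.
\end{enumerate}
Inducting on the $H$-distance rather than the $G$-distance is what makes isometry come for free and removes the need for any path-lowering.
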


Three vertices $v_1,v_2,v_3$ of a graph $G$ form a \emph{metric triangle}
\cite{Ch_delta} $v_1v_2v_3$ if the intervals $I(v_1,v_2), I(v_2,v_3),$ and
$I(v_3,v_1)$ pairwise intersect only in the common end-vertices, i.e.,
$I(v_i, v_j) \cap I(v_i,v_k)= \{v_i\}$ for any $1 \leq i, j, k \leq 3$.
If $d(v_1,v_2)=d(v_2,v_3)=d(v_3,v_1)=k,$ then this metric triangle is
called {\it equilateral} of {\it size} $k.$ An equilateral metric triangle
$v_1v_2v_3$ of size $k$ is called \emph{strongly equilateral} if
$d(v_1,v)=k$ for all $v\in I(v_2,v_3)$.

A metric triangle $v_1v_2v_3$ of $G$ is a \emph{quasi-median} of the
triplet $x,y,z$ if the following hold: % metric equalities are satisfied:
\begin{align*}
d(x,y)&=d(x,v_1)+d(v_1,v_2)+d(v_2,y),\\
d(y,z)&=d(y,v_2)+d(v_2,v_3)+d(v_3,z),\\
d(z,x)&=d(z,v_3)+d(v_3,v_1)+d(v_1,x).
\end{align*}
If $v_1$, $v_2$, and $v_3$ are the same vertex $v$, or equivalently, if the
size of $v_1v_2v_3$ is zero, then this vertex $v$ is called a \emph{median}
of $x,y,z$.  While a median may not exist and may not be unique, a
quasi-median of every triplet $x,y,z$ always exists.

Weakly modular graphs can be characterized in the following way in terms of
metric triangles:
\begin{lemma} \upshape{\cite{Ch_metric}}
  \label{strongly-equilateral}
  A graph $G$ is weakly modular if and only if all metric triangles of $G$
  are strongly equilateral.
\end{lemma}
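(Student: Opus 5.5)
The plan is to prove the two implications separately. For ($\Leftarrow$) we derive (TC) and (QC) from quasi-medians. For ($\Rightarrow$) we run a distance-propagation argument along geodesics of $I(v_2,v_3)$, powered by (TC) and (QC).

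\emph{($\Leftarrow$) Quasi-medians give (TC) and (QC).} Assume all metric triangles are strongly equilateral. Given a triplet $x,y,z$, we build a quasi-median in the standard way.
\begin{enumerate}
\item Pick $v_1\in I(x,y)\cap I(x,z)$ farthest from $x$.
\item Pick $v_2\in I(y,v_1)\cap I(y,z)$ farthest from $y$.
\item Pick $v_3\in I(z,v_1)\cap I(z,v_2)$ farthest from $z$.
\end{enumerate}
One checks that $v_1v_2v_3$ is a metric triangle satisfying the three quasi-median equalities. By hypothesis this triangle is equilateral of some size $k'$.

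For (TC), let $u\sim v$ with $d(s,u)=d(s,v)=k$, and take a quasi-median of $u,v,s$. Since $v_1,v_2$ lie on a $(u,v)$-geodesic, either $v_1=v_2$ or $\{v_1,v_2\}=\{u,v\}$.
\begin{itemize}
\item If $v_1=v_2$, this common vertex is $u$ or $v$. Then one of $u,v$ lies on a geodesic from the other to $s$, contradicting $d(s,u)=d(s,v)$.
\item If $\{v_1,v_2\}=\{u,v\}$, the triangle has size $1$. Hence $v_3$ is a common neighbour of $u,v$ lying in $I(u,s)$, so $d(s,v_3)=k-1$.
\end{itemize}

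For (QC), let $u,v$ with $d(u,v)=2$, common neighbour $w$, and $d(s,u)=d(s,v)=d(s,w)-1=k$. The size of the quasi-median triangle of $u,v,s$ is at most $2$.
\begin{itemize}
\item \emph{Size $0$.} The median $m$ lies in $I(u,v)\cap I(u,s)\cap I(v,s)$. If $m=u$ then $d(s,v)=k+2$, which is impossible; similarly $m\ne v$. So $m$ is a common neighbour of $u,v$ with $d(s,m)=k-1$.
\item \emph{Size $1$.} We get $d(s,u)\ne d(s,v)$ from the quasi-median equalities, a contradiction.
\item \emph{Size $2$.} Here $v_1=u$, $v_2=v$, and $d(s,v_3)=k-2$. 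Strong equilaterality applied to $w\in I(u,v)$ gives $d(v_3,w)=2$. Hence $d(s,w)\le k<k+1$, a contradiction.
\end{itemize}

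\emph{($\Rightarrow$) Propagation along $I(v_2,v_3)$.} Let $G$ be weakly modular and $v_1v_2v_3$ a metric triangle, and put $k=d(v_1,v_2)$. The key claim is that $d(v_1,x)=k$ for every $x\in I(v_2,v_3)$. Applying the claim with $x=v_3$ gives $d(v_1,v_3)=d(v_1,v_2)$; by symmetry the triangle is equilateral, and the claim is then exactly strong equilaterality.

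To prove the claim, fix a geodesic $v_2=x_0,x_1,\dots,x_m=v_3$ and let $i$ be the first index with $d(v_1,x_i)\ne k$. We derive a contradiction in each case.
\begin{itemize}
\item \emph{$d(v_1,x_i)=k-1$.} Then $x_i\in I(v_1,x_{i-1})$. If $i=1$, then $x_1\in I(v_1,v_2)\cap I(v_2,v_3)\setminus\{v_2\}$, contradicting the metric triangle.
\item \emph{$i\ge2$ with $d(v_1,x_{i-1})=d(v_1,x_{i-2})=k$.} Apply (TC) to the edge $x_{i-2}x_{i-1}$, or (QC) to the triple $x_{i-2},x_i$ with common neighbour $x_{i-1}$, as appropriate. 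This produces a vertex closer to $v_1$ that can be inserted into a $(v_2,v_3)$-geodesic at an earlier position, and induction on $i$ then pushes the defect back to $i=1$.
\item \emph{$d(v_1,x_i)=k+1$.} A symmetric argument from the $v_3$ end, using $d(v_1,v_3)$ and minimality, yields a vertex of $I(v_1,v_3)\cap I(v_3,v_2)$ other than $v_3$.
\end{itemize}

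The main obstacle is this ($\Rightarrow$) bookkeeping. We must ensure that the vertices produced by (TC) and (QC) remain in $I(v_2,v_3)$ and in the relevant intervals to $v_1$, so that the contradiction with $I(v_i,v_j)\cap I(v_i,v_k)=\{v_i\}$ is genuinely reached. We would organize it as an induction on $d(v_2,x)$, with the inductive hypothesis that all earlier vertices on every $(v_2,v_3)$-geodesic are at distance exactly $k$ from $v_1$.
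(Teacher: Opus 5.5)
The paper gives no proof of this lemma; it quotes it from \cite{Ch_metric}. Your ($\Leftarrow$) direction is correct: the greedy construction does produce a quasi-median, and the size analysis for (TC) and (QC) is sound, including the use of strong equilaterality at $w\in I(u,v)$ in the size-$2$ case. The ($\Rightarrow$) direction, which is the substantive one, has a genuine gap, and its one concrete step is also wrong as stated.

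\emph{The downward step.} When $d(v_1,x_{i-2})=d(v_1,x_{i-1})=k$ and $d(v_1,x_i)=k-1$, (QC) does not apply to $x_{i-2},x_i$, since their distances to $v_1$ are $k$ and $k-1$. The vertex $z$ produced by (TC) on the edge $x_{i-2}x_{i-1}$ is adjacent to both ends of that edge, so it can replace $x_{i-1}$ only if $z\sim x_i$, and nothing guarantees this. The step can be repaired with an extra move. If $z\nsim x_i$, apply (QC) to $z,x_i$ (common neighbour $x_{i-1}$) to get $w$ with $d(v_1,w)=k-2$. Then $d(x_{i-2},w)=d(x_{i-2},x_i)=2$, so (TC) with base point $x_{i-2}$ on the edge $wx_i$ gives $t\sim x_{i-2},x_i$ with $d(v_1,t)=k-1$, and $t$ does replace $x_{i-1}$.

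\emph{The upward case is the real gap.} The case $d(v_1,x_i)=k+1$ is not symmetric to the downward one, for three reasons. First, you do not know $d(v_1,v_3)=k$; that is part of what is being proved. Second, the corner conditions forbid only a first step \emph{down}. Third, a profile such as $k,k+1,k,k-1$ along a fixed geodesic is never caught by a ``first deviation'' analysis. Your stated induction hypothesis (all earlier vertices on every $(v_2,v_3)$-geodesic are at distance $k$) is essentially the claim itself.

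\emph{What is missing: an extremal choice of geodesics.}
\begin{enumerate}
\item Take a $(p,q)$-geodesic minimizing the sum of distances to $v_1$. On it, a peak $t-1,t,t-1$ is removed by (QC), and the patterns $t,t,t-1$ and $t-1,t,t$ are removed by the (TC)--(QC)--(TC) move above. Hence along it $d(v_1,\cdot)$ strictly decreases, then stays constant, then strictly increases.
\item For $x\in I(v_2,v_3)$, apply this to optimal $(v_2,x)$- and $(v_3,x)$-geodesics. The corner conditions at $v_2$ and $v_3$ rule out an initial descent, so $d(v_1,x)\ge\max\{d(v_1,v_2),d(v_1,v_3)\}$. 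This gives $d(v_1,v_2)=d(v_1,v_3)=k$ and $d(v_1,\cdot)\ge k$ on $I(v_2,v_3)$.
\item Suppose some vertex of $I(v_2,v_3)$ were farther than $k$ from $v_1$. The monotone profiles then give an $x'$ with $d(v_1,x')=k+1$. Its neighbours on optimal geodesics to $v_2$ and to $v_3$ are at distance $k$ from $v_1$ and at distance $2$ from each other. (QC) then produces a vertex of $I(v_2,v_3)$ at distance $k-1$ from $v_1$, a contradiction.
\end{enumerate}
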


\begin{definition} [Bridged and weakly bridged graphs]
  A graph $G$ is called \emph{bridged} \cite{FaJa,SoCh} if the balls
  $B_r(S)$ around convex sets $S$ of $G$ are convex.  Equivalently, $G$ is
  bridged if all isometric cycles of $G$ have length 3. Bridged graphs are
  weakly modular \cite{Ch_metric}.

  A graph $G=(V,E)$ is called \emph{weakly bridged} \cite{ChOs} if $G$ is a
  weakly modular graph with convex balls. Equivalently, $G$ is weakly
  bridged if $G$ is a weakly modular graph not containing induced 4-cycles
  \cite{ChOs}.  Every bridged graph is weakly bridged.

  A graph $G=(V,E)$ is called \emph{pre-median} if it is weakly modular
  and does not contain an induced $K_{2,3}$ or an induced $W^-_4$
  \cite{Cha1,Cha2}.
\end{definition}

\subsection{Positioning condition}
The positioning condition was introduced by Maurer \cite{Mau} and used in
\cite{Mau} and \cite{Ch_delta} to characterize the basis graphs of matroids
and even $\Delta$-matroids.

The \emph{layering} of a graph $G=(V,E)$ with respect to a vertex $b$ is
the partition ${\mathcal L}(b)=\{ S_k(b), k=0,1,2,\ldots\}$ of $V$ into
concentric spheres centered at $b$. Up to relabeling of its vertices, each
square $v_1v_2v_3v_4$ of $G$ is located in this layering ${\mathcal L}(b)$
in one of the following positions:
\begin{itemize}
\item[(L1)] all vertices in one sphere $S_k(b)$;
\item[(L2)] two adjacent vertices in $S_k(b)$ and two other adjacent
  vertices in the next sphere $S_{k+1}(b)$;
\item[(L3)] in three consecutive spheres $S_{k}(b),S_{k+1}(b)$, and
  $S_{k+2}(b)$ with two non-adjacent vertices in the middle sphere
  $S_{k+1}(b)$;
\item[(L4)] three vertices in $S_k(b)$ and one vertex in the next sphere
  $S_{k+1}(b)$;
\item[(L5)] one vertex in $S_k(b)$ and three vertices in the next sphere
  $S_{k+1}(b)$;
\item[(L6)] two non-adjacent vertices in $S_k(b)$ and two non-adjacent
  vertices in $S_{k+1}(b)$.
\end{itemize}

\begin{definition}[Positioning and Weak Positioning Conditions]
  A graph $G=(V,E)$ satisfies the \emph{positioning condition (PC)} if for
  every vertex $b$ and every square $v_1v_2v_3v_4$ of $G$, in the layering
  ${\mathcal L}(b)$ of $G$, $v_1v_2v_3v_4$ is located in one of the
  positions (L1), (L2), or (L3). Equivalently, $G$ satisfies (PC) if the
  equality $d(b,v_1)+d(b,v_3)=d(b,v_2)+d(b,v_4)$ holds.

  A graph $G=(V,E)$ satisfies the \emph{weak positioning condition (WPC)}
  if for every $b$ and every square $v_1v_2v_3v_4$ of $G$, $v_1v_2v_3v_4$ is located in one of the
  positions (L1), (L2), (L3), or (L4) in the layering
  ${\mathcal L}(b)$ of $G$.
\end{definition}

In conditions (L1)--(L6), $k$ is the distance from $b$ to the square
$v_1v_2v_3v_4$. If $k=1$, then we refer to \emph{local conditions}
(L1)-(L6). If the square $v_1v_2v_3v_4$ satisfies the local condition (L4),
then $v_1,v_2,v_3,v_4,b$ induce the graph $W^-_4$ from Fig.~\ref{f:W4-}. If
$v_1v_2v_3v_4$ satisfies the local condition (L6), then $v_1,v_2,v_3,v_4,b$
induce the graph $K_{2,3}$ from Fig.~\ref{f:forbidden-subgraphs}.  Finally,
if $v_1v_2v_3v_4$ satisfies the local condition (L5), then
$v_1,v_2,v_3,v_4,b$ induce the graph $C_4+e$, such that $v_1v_2v_3v_4$ is
the 4-cycle $C_4$, $bv_1$ is a pendant edge, and $d(b,v_3)=2$.

\begin{figure}[ht]
\centering
\begin{subfigure}[t]{0.33\textwidth}
  \centering
  \resizebox{\linewidth}{!}{
    \begin{tikzpicture}[every node/.style={circle, draw, fill=white, inner sep=1.5pt}]
      \node (u) at (0,0) {$u$};
      \node (v) at (2,2) {$v$};
      \node (w) at (2,-2) {$w$};
      \node (y) at (6,0) {$y$};
      \node (x) at (4,0) {$x$};
      \draw (u) -- (v) -- (y) -- (w) -- (x) -- (v);
      \draw (u) -- (w);
    \end{tikzpicture}}
  \caption{$K_{2,3}$}
\end{subfigure}
\quad
\begin{subfigure}[t]{0.33\textwidth}
  \centering
  \resizebox{\linewidth}{!}{
    \begin{tikzpicture}[every node/.style={circle, draw, fill=white, inner sep=1.5pt}]
      \node (u) at (0,0) {$u$};
      \node (v) at (2,2) {$v$};
      \node (w) at (2,-2) {$w$};
      \node (y) at (6,0) {$y$};
      \node (x) at (4,0) {$x$};
      \draw (u) -- (v) -- (y) -- (w) -- (x) -- (v);
      \draw (u) -- (w) -- (v);
    \end{tikzpicture}}
  \caption{The propeller $K^+_{2,3}$}
\end{subfigure}%
\\
\begin{subfigure}[t]{0.4\textwidth}
  \centering
  \resizebox{\linewidth}{!}{
    \begin{tikzpicture}[every node/.style={circle, draw, fill=white, inner sep=1.5pt}]
      \node (u) at (-6,0) {\footnotesize $u$};
      \node (v) at (-6,2) {\footnotesize $v$};
      \node (x) at (-4,2) {\footnotesize $x$};
      \node (a) at (-2,2) {};
      \node (b) at (-4,0) {};
      \node (w) at (-2,0) {\footnotesize $w$};
      \node (y) at (0,0) {\footnotesize $y$};
      \draw (u) -- (v) --(x) -- (a) -- (y) -- (w) --(b) -- (u);
      \draw (v) -- (b) -- (a) -- (w) --(x) -- (b);
      \draw (v) to[bend left=30] (a);
    \end{tikzpicture}}
  \caption{The graph $R_1$}
\end{subfigure}%
\quad
\begin{subfigure}[t]{0.4\textwidth}
  \centering
  \resizebox{\linewidth}{!}{
    \begin{tikzpicture}[every node/.style={circle, draw, fill=white, inner sep=1.5pt}]
      \node (u) at (-6,0) {\footnotesize $u$};
      \node (v) at (-6,2) {\footnotesize $v$};
      \node (x) at (-4,2) {\footnotesize $x$};
      \node (a) at (-2,2) {};
      \node (b) at (-4,0) {};
      \node (w) at (-2,0) {\footnotesize $w$};
      \node (y) at (0,0) {\footnotesize $y$};
      \draw (u) -- (v) --(x) -- (a) -- (y) -- (w) --(b) -- (u);
      \draw (b) -- (a) -- (w) --(x)--(b);
      \draw (v) to[bend left=30] (a);
    \end{tikzpicture}}
  \caption{The graph $R_2$}
\end{subfigure}%
\caption{Non-smooth graphs.} \label{f:forbidden-subgraphs}
\end{figure}

\section{Main results}
In this section, we characterize smooth and strongly smooth weakly modular
graphs in terms of forbidden isometric subgraphs on 5 and 7 vertices and
one forbidden configurations on 5 vertices.  In
Fig.~\ref{f:forbidden-subgraphs} we present four non-smooth graphs (the
first three also occur in \cite{BrChNSScSt}): $K_{2,3}$, the propeller
$K_{2,3}^+$ ($K_{2,3}$ plus an edge between the two vertices of degree 3),
and the graphs $R_1$ and $R_2$.  In all four graphs, the distance
point-shadow $v|u$ is not convex: $x,y\in v|u, w\in I(x,y),$ but
$w\notin v|u$.  The graph $W_4^-$ from Fig.~\ref{f:W4-} is weakly modular,
smooth, but not strongly smooth.  There exists another setting with
non-convex distance point-shadows. It occurs from the local (L5) condition,
discussed above. We say that the vertices $b,v_1,v_2,v_3,v_4$ of a graph
$G$ define the \emph{configuration} $\mathcal{C}$ if they induce in $G$ the
graph $C_4+e$ (the 4-cycle $v_1v_2v_3v_4$ plus the pendant edge $bv_1$)
with the distance constraint $d(b,v_3)=2$. Then, $v_2,v_4\in v_1|b$ and
$v_3\in I(v_2,v_4)\setminus (v_1|b)$, thus $v_1|b$ is not convex.  By
applying the triangle condition, the configuration $\mathcal{C}$ extends to
five non-smooth graphs $D_1$, $D_2$, $D_3$, $D_4$, $D_5$ on 7 vertices,
depicted in Fig.~\ref{f:forbidden-subgraphs-bid}.  We continue with the
formulation of the main results of this paper:

\begin{theorem}
  \label{t:wm-smooth}
  For a weakly modular graph $G=(V,E)$ the following conditions are
  equivalent:
  \begin{itemize}
  \item[(1)] $G$ is smooth;
  \item[(2)] $G$ contains neither the graphs $K_{2,3}$, $K^+_{2,3}$, $R_1$,
    and $R_2$ from Fig.~\ref{f:forbidden-subgraphs} as isometric subgraphs
    nor the configuration $\mathcal{C}$;
  \item[(3)] $G$ does not contains the graphs $K_{2,3},K^+_{2,3}, R_1,$ and
    $R_2$ from Fig.~\ref{f:forbidden-subgraphs} and the graphs
    $D_1,D_2,D_3,D_4,D_5$ from Fig.~\ref{f:forbidden-subgraphs-bid} as
    isometric subgraphs.
  \end{itemize}
\end{theorem}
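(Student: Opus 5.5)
We prove the cycle (1)$\Rightarrow$(3)$\Rightarrow$(2)$\Rightarrow$(1). For (1)$\Rightarrow$(3) we use that smoothness is inherited by isometric subgraphs: if $H$ is isometric in $G$, then intervals and distance point-shadows of $H$ are the traces on $H$ of those of $G$, so condition (1) of the definition of smooth graphs passes to $H$. It then suffices to check directly that each of $K_{2,3},K^+_{2,3},R_1,R_2,D_1,\dots,D_5$ violates smoothness. For the first four this was recorded above: $x,y\in v|u$ and $w\in I(x,y)\setminus v|u$. For the $D_i$, the embedded configuration $\mathcal C$ gives $v_2,v_4\in v_1|b$ and $v_3\in I(v_2,v_4)\setminus v_1|b$.

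For (3)$\Rightarrow$(2) we show that a configuration $\mathcal C$ in a weakly modular graph forces an isometric $D_i$. Take $b,v_1,\dots,v_4$ with $d(b,v_3)=2$ and $d(b,v_2)=d(b,v_4)=2$.
\begin{itemize}
\item Apply (TC) to the edge $v_2v_3$ and the vertex $b$ to obtain $a\sim v_2,v_3$ with $d(b,a)=1$.
\item Similarly obtain $c\sim v_3,v_4$ with $d(b,c)=1$.
\item Case on the remaining adjacencies ($a=c$ is impossible since $v_2v_3v_4$ is induced with $v_2\nsim v_4$; then whether $a\sim c$, $a\sim v_1$, $c\sim v_1$, $a\sim v_4$, $c\sim v_2$).
\end{itemize}
Each admissible pattern should be one of $D_1,\dots,D_5$. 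In patterns where some other vertex is created (e.g.\ $a\sim v_4$ gives a $W_4^-$ or $K_{2,3}$-type subgraph), we instead find a forbidden $K_{2,3}$ or $K_{2,3}^+$ inside. Isometry is automatic because the diameter is at most $2$ and all distances involved are prescribed.

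The main direction is (2)$\Rightarrow$(1). By condition (3) of the definition of smooth graphs it suffices to show that $v|u$ is convex for adjacent $u,v$. By Lemma \ref{shadow-connected} it is connected, so by Lemma \ref{thm:local-convex->global-convex} it suffices to prove local convexity. Take $x,y\in v|u$ with $d(x,y)=2$ and a common neighbour $w$, and suppose $w\notin v|u$. Set $k=d(u,x)$, so $d(v,x)=k-1$. Compare $d(u,y)$ with $k$ (it is $k$ or $k\pm1$) and locate $w$ in the layering $\mathcal L(u)$ and $\mathcal L(v)$; the failure of $w\in v|u$ means $d(u,w)\le d(v,w)$.

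In each case we use (TC)/(QC) toward $v$ and toward $u$ to build small configurations. The case $d(u,x)=d(u,y)=k$, $d(u,w)=k+1$ is excluded outright, since then $w\in v|u$ would follow from QC toward $v$. When $d(u,w)\le k$, applying QC relative to $v$ gives $z$ with $z\sim x,y$ and $d(v,z)=k-2$. We then get a square or a triangle configuration at distance about $k$ from $u$ and $v$. Lemma \ref{strongly-equilateral} on the metric triangles $u,w,\cdot$ constrains the sizes. We expect to reduce to the base case $k=2$ by induction: replace $u,v$ by suitable vertices closer to $x,y,w$ along geodesics, preserving the violation, via a minimal counterexample with respect to $d(u,w)$. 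In the base case the vertices $u,v,x,y,w,z$ and the (TC)-witnesses are exactly $K_{2,3}$, $K^+_{2,3}$, $R_1$, $R_2$, or contain $\mathcal C$.

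This reduction to the local situation, i.e.\ pushing the bad triple toward $u$ while keeping it bad, is the main obstacle. We would first try choosing a counterexample minimizing $d(u,x)+d(u,y)$ and use QC on the square $x,w,y,z$ relative to $u$.
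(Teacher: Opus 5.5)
\paragraph{Directions (1)$\Rightarrow$(3) and (3)$\Rightarrow$(2).} Your (1)$\Rightarrow$(3), via heredity of smoothness to isometric subgraphs, is correct. Your route for (3)$\Rightarrow$(2) is the right one, but two details are wrong. First, $a=c$ is not impossible: a common neighbour of $v_2,v_3,v_4$ is perfectly compatible with $v_2\nsim v_4$. Second, $W^-_4$ is not forbidden in this theorem (it is smooth). The cases $a=c$, $a\sim v_4$ and $c\sim v_2$ are all excluded the same way. In each, $a$ (resp.\ $c$) and $v_1$ are both adjacent to the pairwise non-adjacent vertices $b,v_2,v_4$, so these five vertices induce $K_{2,3}$ or $K^+_{2,3}$.

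\paragraph{The gap: (2)$\Rightarrow$(1).} This is the substance of the theorem, and you leave it as a plan with an unresolved ``main obstacle''. Moreover, the one concrete step you state fails. In your notation ($k=d(u,x)=d(u,y)$), $d(u,w)\le k$ does not allow (QC) toward $v$: $d(u,w)=d(v,w)=k-1$ is possible, and then $w$ lies on the same level as $x,y$ with respect to $v$. What is missing is not a reduction of $u,v$ toward $x,y$, but the following two ingredients.

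\emph{Weak positioning condition.} The exclusions yield it: relative to any base point $b$, no square is in position (L6) or (L5).
\begin{itemize}
\item[(L6)] By (QC) one obtains an induced $K_{2,3}$.
\item[(L5)] Let $d(b,v_1)=m$ with $v_2,v_3,v_4$ at level $m+1$. (TC) gives $s\sim v_2,v_3$ at level $m$. Then (QC), or (TC) if $s\sim v_1$, gives $z\sim s,v_1$ at level $m-1$, and $z,v_1,v_2,v_3,v_4$ form $\mathcal C$.
\end{itemize}
This disposes of every case in which the in-shadow common neighbour $z$ of $x,y$ is non-adjacent to $w$, at any distance and without a base-case reduction. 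For instance, write now $k=d(v,x)=d(v,y)$ with $d(v,w)=k+1$. Choose $z$ at level $k-1$ from $v$ by (QC). Then, with respect to $u$, $z$ is at level $k$ and $x,y$ at level $k+1$. So (WPC) forces $d(u,w)=k+2$, i.e.\ $w\in v|u$.

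\emph{Minimal counterexample with $u,v$ fixed.} Keep $u,v$ fixed and choose a counterexample minimizing $d(v,x)+d(v,y)+d(v,z)+d(v,w)$ over sextuples with $z\in v|u$. This lets you replace $x$ or $z$ by their (TC)/(QC)-projections toward $v$, which stay in $v|u$. The remaining triangle configurations are then closed directly by (TC) toward $u$:
\begin{itemize}
\item If $d(v,x)=d(v,y)=d(v,w)=k$, $d(v,z)=k-1$ and $z\sim w$, take $t\sim z,w$ with $d(u,t)=k-1$. Then $t,x,y,z,w$ induce a propeller.
\item If $d(v,y)=k+1$, normalize to $z\sim w$. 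Take $t\sim x,z$ toward $v$, and $r\sim z,w$ toward $u$; $r$ must also be adjacent to $x$, since otherwise $r,x,y,z,w$ induce a propeller. Take $p\sim t,r$ toward $u$. Then $p,t,r,x,z,w,y$ is an isometric $R_1$ or $R_2$ according to whether $t\sim r$.
\end{itemize}
Without these steps the implication (2)$\Rightarrow$(1) is not established.
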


\begin{theorem}\label{t:wm-strongly-smooth} A weakly modular graph
  $G=(V,E)$ is strongly smooth if and only if $G$ does not contains the
  graphs $W_4^-$, $K_{2,3}$, $K^+_{2,3}$, and $R_1$ from Figures
  \ref{f:W4-} and \ref{f:forbidden-subgraphs} as isometric subgraphs.
\end{theorem}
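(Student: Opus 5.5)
I would reduce Theorem~\ref{t:wm-strongly-smooth} to Theorem~\ref{t:wm-smooth} together with Lemma~\ref{strongly-smooth->smooth}. The idea is that, in a graph with convex intervals, strongly smooth and smooth coincide: $v/u=v|u$ because $\conv(u,x)=I(u,x)$.

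\emph{Necessity.} I would check directly that each of $W_4^-$, $K_{2,3}$, $K^+_{2,3}$, $R_1$ has a non-convex point-shadow. For $W_4^-$ this is noted after Lemma~\ref{strongly-smooth->smooth}. For the other three, $v|u$ is already non-convex, and in $K_{2,3}$, $K^+_{2,3}$ and $R_1$ the intervals are convex, so $v/u=v|u$. Then I need this property to pass from an isometric subgraph $H$ to $G$. The clean route avoids shadows in $G$ altogether: in each of the four graphs the failure is witnessed by vertices $u,v,x,y,w$ with $v\in I_H(u,x)\cap I_H(u,y)$, $w\in I_H(x,y)$ and $v\notin I_H(u,w)$. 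Since $H$ is isometric, these distance relations hold verbatim in $G$, so condition~(1) of smoothness fails in $G$. Hence $G$ is not smooth, and by Lemma~\ref{strongly-smooth->smooth} it is not strongly smooth. For $W_4^-$, which is smooth, I would argue instead via intervals. In $W_4^-$, $I(u,w)=\{u,x,y,w\}$ is not convex, since $v\in I(x,y)$. An isometric copy in $G$ keeps $d(u,w)=2$, $d(x,y)=2$ and $v\sim x,y$, so $I(u,w)$ is not convex in $G$, and Lemma~\ref{strongly-smooth->smooth} applies.

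\emph{Sufficiency.} Assume $G$ is weakly modular and contains none of the four graphs isometrically. The plan has three steps.

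\textbf{Step 1: $G$ satisfies (PC).} Take a square $v_1v_2v_3v_4$ and a vertex $b$ at distance $k$ from it, and suppose it sits in position (L4), (L5) or (L6). Using (TC)/(QC) I would descend toward $b$ layer by layer, in the style of Maurer/Chepoi, to obtain a square in a bad position at distance $k=1$. The local (L4) and (L6) cases give induced $W_4^-$ and $K_{2,3}$ (both isometric, since they have diameter $2$). The local (L5) case is configuration $\mathcal{C}$. Applying (TC) to $v_2,v_3$ (respectively $v_4,v_3$), both at distance $2$ from $b$, produces common neighbours, and the resulting graph is one of $D_1,\dots,D_5$. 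I expect each $D_i$ to contain an induced $W_4^-$ or $K_{2,3}$. If some $D_i$ does not, I would instead use the absence of $W_4^-$ to force extra adjacencies that yield $K_{2,3}^+$ or $R_1$.

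\textbf{Step 2: intervals are convex.} Weakly modular graphs satisfying (PC) with no $W_4^-$ should have convex intervals. Fix $I(u,v)$ and check local convexity, so that Lemma~\ref{thm:local-convex->global-convex} applies. Take $x,y\in I(u,v)$ with $d(x,y)=2$ and a common neighbour $z\notin I(u,v)$. If $x,y$ lie in the same layer from $u$, then (TC)/(QC) give a square or triangle configuration that, relative to $v$ or $u$, is in position (L4) or (L5), or else yields $W_4^-$. If $x,y$ lie in different layers, the situation reduces to (L3) versus (L5).

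\textbf{Step 3: conclude.} With intervals convex, strong smoothness equals smoothness. Since (PC) excludes configuration $\mathcal{C}$, and $R_2$ contains an induced $W_4^-$ (its vertices $x,b,a,w$ with $v$ adjacent to $x,b,a$, or similar), condition~(2) of Theorem~\ref{t:wm-smooth} holds, so $G$ is smooth.

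The main obstacle is Step~1: the descent from a distant (L5) or (L6) position to a local one, and the verification that every $D_i$ forces one of the four forbidden graphs. I would try first to show that each $D_i$ contains an induced $W_4^-$ or $K_{2,3}$, and fall back on $R_1$ or $K^+_{2,3}$ only if needed.
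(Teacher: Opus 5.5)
Your necessity argument is correct: pushing the five-point witness of the failure of condition (1) through the isometric embedding works. Your aside that $K_{2,3}$ and $R_1$ have convex intervals is false, since $I(u,x)$ in $K_{2,3}$ and $I(u,y)$ in $R_1$ are not convex, but nothing depends on it. Step~3 is also fine. $\mathcal{C}$ is a square in local (L5) position, so (PC) excludes it outright. $R_2$ does contain an induced $W_4^-$, on $\{u,v,x,a,b\}$; it is not on the vertices you name, since $v\nsim b$ in $R_2$.

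\textbf{The gap is Step~2.} The assertion that a weakly modular graph satisfying (PC) and containing no $W_4^-$ has convex intervals is false, and $R_1$ itself is a counterexample. A direct check shows $R_1$ satisfies (TC) and (QC). It has no induced $4$-cycle: the two neighbours of $u$ are adjacent, so are those of $y$, and the other five vertices induce $K_5$ minus the edge $vw$. Hence (PC) holds vacuously and there is no $W_4^-$. Yet $v,w\in I(u,y)$, $d(v,w)=2$, and $x\in I(v,w)\setminus I(u,y)$. Your layering sketch (``reduces to (L3) versus (L5)'') has no square to act on here. So convexity of intervals cannot come from (PC) plus the absence of $W_4^-$; it must use the exclusion of $K^+_{2,3}$ and $R_1$. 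This is the missing idea.

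The paper gets it directly. Take a non-convex $I(u,v)$ with $d(u,v)$ minimal, and $x,y\in I(u,v)$ at distance $2$ with a common neighbour $z\notin I(u,v)$. Minimality gives $I(u,x)\cap I(u,y)=\{u\}$ and $I(v,x)\cap I(v,y)=\{v\}$. So the quasi-medians of $u,x,y$ and of $v,x,y$ are metric triangles with a corner at $u$, resp.\ $v$, and strong equilaterality (Lemma~\ref{strongly-equilateral}) gives $d(u,v)\le 4$. Then $d(u,v)=4$ contradicts $z\notin I(u,v)$, and $d(u,v)=2$ yields $K_{2,3}$ or $W_4^-$. The case analysis for $d(u,v)=3$ is exactly where $K^+_{2,3}$ and an isometric $R_1$ appear.

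\textbf{A smaller issue in Step~1.} Your expectation that each $D_i$ contains an induced $W_4^-$ or $K_{2,3}$ fails for $D_3$ and $D_5$, which contain none of the four forbidden graphs. In $D_5$, for instance, the only triangles are $v_2v_3x_1$ and $v_3v_4x_2$, and no two vertices have three common neighbours. So you must bring in a further vertex. Take $x_1\sim b,v_2,v_3$ from (TC).
\begin{itemize}
\item If $x_1$ is adjacent to exactly one of $v_1,v_4$, you get $W_4^-$.
\item If it is adjacent to both, then $b,x_1,v_1,v_2,v_4$ induce $K^+_{2,3}$.
\item If it is adjacent to neither, (TC) for the edge $v_1v_4$ and $x_1$ gives $w\sim x_1,v_1,v_4$. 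Avoiding $K_{2,3}$ and $W_4^-$ on $v_1,x_1,b,v_2,w$ forces $w\sim b,v_2$, and then $b,v_2,v_4,w,v_1$ induce $K^+_{2,3}$.
\end{itemize}
This is the local case of the paper's (PC) proof. With it, your reduction of non-local (L4), (L5), (L6) positions to local ones goes through. One application of (QC), resp.\ (TC) followed by (TC)/(QC), moves the base point to a neighbour of the square. So Step~1 is repairable, while Step~2 needs a genuinely different argument.
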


\begin{theorem}\label{t:wb-smooth-strongly-smooth} For a weakly bridged
  graph $G=(V,E)$, the following conditions are equivalent:
  \begin{itemize}
  \item[(1)] $G$ is strongly smooth;
  \item[(2)] $G$ is smooth;
  \item[(3)] $G$ does not contains $K_{2,3}^+$ and $R_1$ as isometric
    subgraphs.
  \end{itemize}
\end{theorem}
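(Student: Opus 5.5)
We derive this from Theorems~\ref{t:wm-smooth} and \ref{t:wm-strongly-smooth}, using that a weakly bridged graph is a weakly modular graph without induced $4$-cycles. The implication (1)$\Rightarrow$(2) is Lemma~\ref{strongly-smooth->smooth}. For (2)$\Rightarrow$(3), smoothness of $G$ gives, by Theorem~\ref{t:wm-smooth}, that $G$ contains no isometric $K^+_{2,3}$ or $R_1$ (indeed, these graphs are not smooth and smoothness passes to isometric subgraphs). The real content is (3)$\Rightarrow$(1).

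For (3)$\Rightarrow$(1) we apply Theorem~\ref{t:wm-strongly-smooth}. It suffices to show that $G$ contains none of $W_4^-$, $K_{2,3}$, $K^+_{2,3}$, $R_1$ as isometric subgraphs. The last two are excluded by hypothesis. For the first two, each of $W^-_4$ and $K_{2,3}$ contains an induced $4$-cycle: in $W_4^-$ this is the cycle $uxwy$ from Fig.~\ref{f:W4-}, and in $K_{2,3}$ any two degree-$2$ vertices together with the two degree-$3$ vertices form one. An isometric subgraph is induced, so either graph would yield an induced $C_4$ in $G$. This is impossible, since weakly bridged graphs have no induced $4$-cycles (by the characterization of \cite{ChOs} recalled in the definition). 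Hence $G$ is strongly smooth.

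The remaining point to verify is that $K^+_{2,3}$ and $R_1$ themselves contain no induced $4$-cycle; otherwise condition (3) would be vacuous and the statement would still hold but be less informative. In $K^+_{2,3}$, the edge between the two degree-$3$ vertices chords every $4$-cycle. In $R_1$, the extra edges $vb$ and $va$ triangulate the relevant quadrilaterals. This check is not logically needed for the equivalence. The only potential obstacle is thus the reliance on Theorem~\ref{t:wm-strongly-smooth}, which carries the substantive work. Given that theorem, the argument is immediate, because the no-induced-$C_4$ property removes exactly the forbidden graphs $W_4^-$ and $K_{2,3}$ that distinguish the strongly smooth case.
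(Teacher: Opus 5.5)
Your proposal is correct and follows the paper's own proof essentially step for step. The paper likewise proves (1)$\Rightarrow$(2) by Lemma~\ref{strongly-smooth->smooth} and (2)$\Rightarrow$(3) by Theorem~\ref{t:wm-smooth}. For (3)$\Rightarrow$(1) it uses the absence of induced $4$-cycles to exclude $K_{2,3}$ and $W_4^-$ and then invokes Theorem~\ref{t:wm-strongly-smooth}; it also mentions $R_2$ there, which that theorem does not need, so your version is if anything slightly tighter.
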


\begin{figure}[ht]
  \centering
    \begin{subfigure}[t]{0.25\textwidth}
      \centering
      \resizebox{\linewidth}{!}{
        \begin{tikzpicture}[every node/.style={circle, draw, fill=white, inner sep=1.5pt}]
          % $K_{2,3}$-draw
          \node (v4) at (-6,0) {$v_4$};
          \node (v1) at (-4,2) {$v_1$};
          \node (v3) at (-4,-2) {$v_3$};
          \node (v2) at (-2,0) {$v_2$};
          \node (b1) at (-4,0.7) {$b$};
          % \node (b2) at (-4,-0.7) {};
          \draw (v1) -- (v2) --(v3) -- (v4) -- (v1) -- (b1);
        \end{tikzpicture}}
      \caption{Configuration $\mathcal{C}$ comprising $C_4+e$ with
        $d(b,v_3)=2$}
    \end{subfigure}%
\quad
\begin{subfigure}[t]{0.25\textwidth}
  \centering
  \resizebox{\linewidth}{!}{
    \begin{tikzpicture}[every node/.style={circle, draw, fill=white, inner sep=1.5pt}]
      % $K_{2,3}$-draw
      \node (v4) at (-6,0) {$v_4$};
      \node (v1) at (-4,2) {$v_1$};
      \node (v3) at (-4,-2) {$v_3$};
      \node (v2) at (-2,0) {$v_2$};
      \node (b1) at (-4,0.7) {$b$};
      % \node (b2) at (-4,-0.7) {$c$};
      \node (x2) at (-5, 0) {$x_2$};
      \node (x1) at (-3, 0) {$x_1$};
      \draw (v1) -- (v2) --(v3) -- (v4) -- (v1) -- (b1);
      \draw (v4) -- (x2) -- (b1) -- (x1) -- (v2);
      \draw (v1) -- (x1) -- (x2) -- (v1);
      \draw (x1) -- (v3);
      \draw (x2) -- (v3);
    \end{tikzpicture}}
  \caption{$D_1$}
\end{subfigure}%
\quad
\begin{subfigure}[t]{0.25\textwidth}
  \centering
  \resizebox{\linewidth}{!}{
    \begin{tikzpicture}[every node/.style={circle, draw, fill=white, inner sep=1.5pt}]
      % $K_{2,3}$-draw
      \node (v4) at (-6,0) {$v_4$};
      \node (v1) at (-4,2) {$v_1$};
      \node (v3) at (-4,-2) {$v_3$};
      \node (v2) at (-2,0) {$v_2$};
      \node (b1) at (-4,0.7) {$b$};
      % \node (b2) at (-4,-0.7) {$c$};
      \node (x2) at (-5, 0) {$x_2$};
      \node (x1) at (-3, 0) {$x_1$};
      \draw (v1) -- (v2) --(v3) -- (v4) -- (v1) -- (b1);
      \draw (v4) -- (x2) -- (b1) -- (x1) -- (v2);
      \draw (v1) -- (x1);
      \draw (v1) -- (x2);
      \draw (x1) -- (v3);
      \draw (x2) -- (v3);
    \end{tikzpicture}}
  \caption{$D_2$}
\end{subfigure}%
\\
\begin{subfigure}[t]{0.25\textwidth}
  \centering
  \resizebox{\linewidth}{!}{
    \begin{tikzpicture}[every node/.style={circle, draw, fill=white, inner sep=1.5pt}]
      % $K_{2,3}$-draw
      \node (v4) at (-6,0) {$v_4$};
      \node (v1) at (-4,2) {$v_1$};
      \node (v3) at (-4,-2) {$v_3$};
      \node (v2) at (-2,0) {$v_2$};
      \node (b1) at (-4,0.7) {$b$};
      % \node (b2) at (-4,-0.7) {};
      \node (x2) at (-5, 0) {$x_2$};
      \node (x1) at (-3, 0) {$x_1$};
      \draw (v1) -- (v2) --(v3) -- (v4) -- (v1) -- (b1);
      \draw (v4) -- (x2) -- (b1) -- (x1) -- (v2);
      \draw (x1) -- (x2);
      \draw (x1) -- (v3);
      \draw (x2) -- (v3);
    \end{tikzpicture}}
  \caption{$D_3$}
\end{subfigure}%
\quad
\begin{subfigure}[t]{0.25\textwidth}
  \centering
  \resizebox{\linewidth}{!}{
    \begin{tikzpicture}[every node/.style={circle, draw, fill=white, inner sep=1.5pt}]
      % $K_{2,3}$-draw
      \node (v4) at (-6,0) {$v_4$};
      \node (v1) at (-4,2) {$v_1$};
      \node (v3) at (-4,-2) {$v_3$};
      \node (v2) at (-2,0) {$v_2$};
      \node (b1) at (-4,0.7) {$b$};
      % \node (b2) at (-4,-0.7) {};
      \node (x2) at (-5, 0) {$x_2$};
      \node (x1) at (-3, 0) {$x_1$};
      \draw (v1) -- (v2) --(v3) -- (v4) -- (v1) -- (b1);
      \draw (v4) -- (x2) -- (b1) -- (x1) -- (v2);
      \draw (v1) -- (x1);
      \draw (x1) -- (v3);
      \draw (x2) -- (v3);
    \end{tikzpicture}}
  \caption{$D_4$}
\end{subfigure}%
\quad
\begin{subfigure}[t]{0.25\textwidth}
  \centering
  \resizebox{\linewidth}{!}{
    \begin{tikzpicture}[every node/.style={circle, draw, fill=white, inner sep=1.5pt}]
      % $K_{2,3}$-draw
      \node (v4) at (-6,0) {$v_4$};
      \node (v1) at (-4,2) {$v_1$};
      \node (v3) at (-4,-2) {$v_3$};
      \node (v2) at (-2,0) {$v_2$};
      \node (b1) at (-4,0.7) {$b$};
      % \node (b2) at (-4,-0.7) {};
      \node (x2) at (-5, 0) {$x_2$};
      \node (x1) at (-3, 0) {$x_1$};
      \draw (v1) -- (v2) --(v3) -- (v4) -- (v1) -- (b1);
      \draw (v4) -- (x2) -- (b1) -- (x1) -- (v2);
      \draw (x1) -- (v3);
      \draw (x2) -- (v3);
    \end{tikzpicture}}
  \caption{$D_5$}
\end{subfigure}%

\caption{Configuration $\mathcal{C}$ and its extensions $D_1$, $D_2$,
  $D_3$, $D_4$, $D_4$, $D_5$.}
\label{f:forbidden-subgraphs-bid}
\end{figure}

\subsection{Proof of Theorem \ref{t:wm-smooth}}

(1)$\Rightarrow$(2) If a weakly modular graph $G$ contains the
configuration $\mathcal{C}$, then $d(b,v_3)=2$ implies that the distance
point-shadow $v_1|b$ is not convex: $v_2,v_4\in v_1|b$ and
$v_3\in I(v_2,v_4)\setminus (v_1|b)$. If $G$ contains one of the graphs
$K_{2,3}$, $K^+_{2,3}$, $R_1$, and $R_2$ from
Fig.~\ref{f:forbidden-subgraphs} as an isometric subgraph, then the
distance point-shadow $v|u$ is not convex: $x,y\in v|u$ and
$w\in I(x,y)\setminus (v|u)$. Consequently, if $G$ is smooth, then $G$ does
not contain an isometric $K_{2,3}$, $K^+_{2,3}$, $R_1,$, $R_2$, or
configuration $\mathcal{C}$.

(2)$\Rightarrow$(3) The configuration $\mathcal{C}$ is present in each of
the graphs $D_1,\ldots,D_5$. Hence if $G$ does not contain $\mathcal{C}$,
then $G$ does not contain any of $D_1,\ldots,D_5$ as an induced
subgraph. Since the diameter of each of these graphs is $2$, $G$ does not
contain $D_1,\ldots,D_5$ as isometric subgraphs.

(3)$\Rightarrow$(2) Suppose that a weakly modular graph $G$ does not
contain the graphs from Fig.~\ref{f:forbidden-subgraphs} as isometric
subgraphs, but $G$ contains the configuration $\mathcal{C}$. We will show
that $G$ contains one of the graphs $D_1,\ldots,D_5$ as an induced (and
thus isometric) subgraph. Applying (TC) twice to $b$ and the edges $v_2v_3$
and $v_3v_4$ of $\mathcal{C}$, we will find two vertices
$x_1\sim b,v_2,v_3$ and $x_2\sim b,v_3,v_4$. If $x_1=x_2$, then the
vertices $b,v_2,v_4,x,v_1$ induce either the forbidden $K_{2,3}$ or the
forbidden $K^+_{2,3}$. Now, suppose that $x_1\ne x_2$. Then $x_1\nsim v_4$
and $x_2\nsim v_2$. Suppose not, and let say $x_1\sim v_4$ (the case
$x_2\sim v_2$ is similar). Then $v_1,x_1,v_2,b,v_4$ induce a forbidden
$K_{2,3}$ or $K_{2,3}^+$. Consequently, $x_1\nsim v_4, x_2\nsim v_2$.  Then
up to isomorphism we get the 5 graphs $D_1,\ldots,D_5$ depending which of
the vertices $x_1,x_2,v_1$ are adjacent. If $x_1,x_2,v_1$ are pairwise
non-adjacent, we get $D_5$ and if $v_1$ is adjacent to only one of the
vertices $x_1,x_2$, we get $D_4$. If only $x_1$ and $x_2$ are adjacent, we
get $D_3$, if $v_1$ is adjacent to both $x_1,x_2$, we get $D_2$, and if
$x_1,x_2,v_1$ are pairwise adjacent, we get $D_1$. Consequently, if a
weakly modular graph $G$ does not contains the graphs
$K_{2,3},K^+_{2,3}, R_1,$ and $R_2$ from Fig.~\ref{f:forbidden-subgraphs}
and the graphs $D_1,D_2,D_3,D_4,D_5$ from
Fig.~\ref{f:forbidden-subgraphs-bid} as isometric subgraphs, then $G$ does
not contain the configuration $\mathcal{C}$.

(2)$\Rightarrow$(1) Suppose that $G$ is a weakly modular graph that does
not contain the configuration $\mathcal{C}$ and has none of the graphs
$K_{2,3}$, $K^+_{2,3}$, $R_1$, and $R_2$ as isometric subgraphs. First we
prove the following property of $G$.

\begin{claim-swm} [Weak positioning condition]
  \label{wposcond}
  % If $G$ satisfies the conditions of Theorem \ref{t:wm-smooth}, then
  $G$ satisfies the weak positioning condition (WPC).
\end{claim-swm}
\begin{proof}
  Let $b$ be any vertex and $v_1v_2v_3v_4$ be any square of $G$. Suppose by
  way of contradiction that $b$ and $v_1v_2v_3v_4$ violate (WPC). Then
  $v_1v_2v_3v_4$ is located in one of the positions (L5) or (L6).  First,
  suppose that $v_1,v_3\in S_{k+1}(b)$ and $v_2,v_4\in S_k(b)$ (position
  (L6)). Then $v_2,v_4\in I(b,v_1)\cap I(b,v_3)$.  By (QC) there exists
  $u\sim v_2,v_4$ at distance $k-1$ from $b$. Then $u\nsim v_1,v_3$ and
  $u,v_1,v_2,v_3,v_4$ induce the forbidden $K_{2,3}$.

  Now suppose that $d(b,v_1)=k$ and $d(b,v_2)=d(b,v_3)=d(b,v_4)=k+1$
  (position (L5)). Since $v_1\nsim v_3$, necessarily $b\ne v_1$, i.e.,
  $k>0$. Since $d(b,v_2)=d(b,v_3)=k+1$, by (TC) there exists
  $u\sim v_2,v_3$ at distance $k$ from $b$ and different from $v_1$.  Since
  $u,v_1\in I(b,v_2)$ by (QC) (or by (TC) if $u\sim v_1$) there exists a
  vertex $z\sim u,v_1$ at distance $k-1$ from $b$. This implies that
  $d(z,v_2)=d(z,v_3)=d(z,v_4)=2=d(v_1,v_3)$.  Since $v_1\sim z,v_2,v_4$,
  the vertices $z,v_1,v_2,v_3,v_4$ form the configuration
  $\mathcal{C}$. Since the conditions (2) and (3) are equivalent, this
  concludes the proof that $G$ satisfies (WPC).
\end{proof}

\begin{claim-swm}[Smoothness]
  \label{smoothness}
  $G$ is smooth.
\end{claim-swm}
\begin{proof} From the definition of smooth graphs, it suffice to prove
  that any distance point-shadow $v|u$ with $u\sim v$ is convex.  Since
  $v|u$ is connected (Lemma \ref{shadow-connected}), to prove that $v|u$ is
  convex by Lemma \ref{thm:local-convex->global-convex} it suffices to
  prove that $v|u$ is locally convex. Consider two vertices $x,y\in v|u$ at
  distance $2$ in $v|u$, let $z$ be a common neighbor of $x$ and $y$
  belonging to $v|u$ and $w$ be any other common neighbor of $x$ and $y$ in
  $G$. We prove that $w$ belongs to $v|u$ by induction on the distance sum
  $\sigma(u,v,x,y,z,w)=d(v,x)+d(v,y)+d(v,z)+d(v,w)$.

  Suppose by way of contradiction that $w$ does not belong to $v|u$ and
  that $u,v,x,y,z,w$ is a sextuple minimizing $\sigma(u,v,x,y,z,w)$ among
  the sextuples violating the local convexity of $v|u$.  Since
  $w\notin v|u$, we have $d(u,w)\le d(v,w)$.  Suppose without loss of
  generality that $k\coloneqq d(v,x)\le d(v,y)$. Since $d(x,y)=2,$ we
  obtain $d(v,y)\le k+2$. If $d(v,y)=k+2$, then $x\in I(y,v)$ and
  $w\in I(x,y)$ imply $w\in I(y,v)\subseteq v|u$. Therefore, we may assume
  that $d(v,y)\in \{ k,k+1\}$.

  \begin{case-kk}
    $d(v,x)=d(v,y)=k$.
  \end{case-kk}

  In the basis case $k=1$ we obtain either a forbidden $K_{2,3}$,
  $K_{2,3}^+$ (if $u\sim w$), or the configuration $\mathcal{C}$ (if
  $d(u,w)=2$). Now, assume $k>1$. Since $x,y\in v|u$, we have
  $d(u,x)=d(u,y)=k+1$. Since $w\notin v|u$, $d(u,w)\le d(v,w)$.  Notice
  that we can suppose that $d(v,z)\le k$. Indeed, if $d(v,z)=k+1$, then
  $x,y\in I(z,v)$, thus by (QC) there exists $z'\sim x,y$ at distance $k-1$
  from $v$. Then $z'\in I(x,v)\subseteq v|u$, thus we can replace $z$ by
  $z'$ and obtain a violating sextuple $u,v,x,y,z',w$ with
  $\sigma(u,v,x,y,z',w)<\sigma(u,v,x,y,z,w)$.  We distinguish three
  subcases.

  \begin{subcase-kkk}
    $d(v,w)=k$ and $z\sim w$.
  \end{subcase-kkk}

  Then $d(u,w)\le k$. Since $d(u,x)=k+1$ and $x\sim w$, $d(u,w)=k$.  If
  $d(v,z)=k-1$, then $d(u,z)=k$. By (TC) there exists $t\sim z,w$ at
  distance $k-1$ from $u$. Since $d(u,x)=d(u,y)=k+1$, $t$ cannot be
  adjacent to $x$ and $y$. Consequently, $t,x,y,z,w$ induce a forbidden
  propeller $K^+_{2,3}$.

  So, suppose  $d(v,z)\ge k$, i.e., $d(v,z)=k$. By (TC), there
  exists a vertex $s\sim z,w$ at distance $k-1$ to $v$. Since $z\in v|u,$
  we get $s\in I(v,z)\subseteq v|u$. If $s$ is not adjacent to $x$ or
  $y$, say $s\nsim y$, then $z,w\in I(s,y),$ thus replacing $x$ by $s$ we
  get a sextuple $u,v,s,y,z,w$ violating the local convexity of $v|u$ and
  for which $\sigma(u,v,s,y,z,w)<\sigma(u,v,x,y,z,w)$, contrary to the
  minimality choice of $u,v,x,y,z,w$. So, $s\sim x,y$. In this case
  $s\in I(x,y)\cap (v|u)$. Replacing $z$ by $s$, we get a sextuple
  $u,v,x,y,s,w$, violating local convexity of $v|u$ for which
  $\sigma(u,v,x,y,s,w)<\sigma(u,v,x,y,z,w)$, contrary to our minimality
  choice. This concludes the analysis of this subcase.

  \begin{subcase-kkk}
    $d(v,w)=k$ and $z\nsim w$.
  \end{subcase-kkk}

  In this case, $xzyw$ is a square of $G$. By Claim \ref{wposcond}, $G$
  satisfies the weak positioning condition (WPC), which we apply to the
  square $xzyw$ and the basis points $v$ and $u$.  First, since
  $d(v,x)=d(v,y)=d(v,w)=k$, by (WPC) applied to $v$, we conclude that
  $d(v,z)\ne k-1$.  Therefore $d(v,z)=k$ or $d(v,z)=k+1$. Since
  $d(v,z)=k+1$ is impossible, necessarily we have $d(v,z)=k$. Since
  $z\in v|u$, we get $d(u,z)=k+1$. Since $w\notin v|u$, we get
  $d(u,w)\le d(v,w)=k$. Since $w\sim x$ and $d(u,x)=k+1,$ $d(u,w)=k-1$ is
  impossible. Therefore, $d(u,w)=k$. Since $d(u,x)=d(u,y)=d(u,z)=k+1$, this
  contradicts (WPC) applied to $xzyw$ and $u$. %This contradiction
  %finishes the analysis of this subcase.

  \begin{subcase-kkk}
    $d(v,w)=k+1$.
  \end{subcase-kkk}

  Then $x,y\in I(v,w)$ and by (QC) there exists $z'\sim x,y$ at distance
  $k-1$ from $v$. Then $z'\in I(x,v)\subseteq v|y$, thus we can suppose
  without loss of generality that $z=z'$. Consequently, $d(u,x)=d(u,y)=k+1$
  and $d(u,z)=k$. Applying condition (WPC) to the square $xzyw$ with
  respect to $u$, we conclude that $d(u,w)=k+2$ (otherwise, $xzyw$ will be
  located as in (L5) or (L6)). But then $w\in v|u$, contradicting our
  assumption.  This finishes the analysis of this subcase and of Case 1.

  \begin{case-kk}
    $d(v,x)=k$ and $d(v,y)=k+1$.
  \end{case-kk}

  Since $w$ is adjacent to $x$ and $y$, we have $k\le d(v,w)\le k+1$. If
  $d(v,w)=k$, then $w\in I(v,y)\subseteq v|u$, contrary to our
  assumption. Thus $d(v,w)=k+1$. By (TC) there exists a vertex $z'\sim w,y$
  at distance $k$ to $v$. Then $z'\in I(y,v)\subseteq v|u$. If $x\nsim z'$,
  then $x,z'\in I(w,v)$ and by (QC) there exists $t\sim x,z'$ at distance
  $k-1$ from $v$.  Then $t\in I(x,v)\subseteq v|u$ and we can consider the
  sextuple $u,v,x,z',t,w$. This sextuple violates the local convexity of
  $v|u$ and $\sigma(u,v,x,z',t,w)<\sigma(u,v,x,y,z,w)$, contrary to our
  minimality choice. Therefore $x\sim z'$ and since $z'\in I(x,y)$, we can
  consider $z'$ as the vertex $z$. Consequently, $z\sim w$ and
  $d(v,z)=k$. Since $d(v,x)=d(v,z)=k$, by (TC) there exists $t\sim x,z$ at
  distance $k-1$ from $v$.

  Since $w\notin v|u$, we have $d(u,w)\le d(v,w)=k+1$. From $w\sim x,y$,
  $d(u,x)=k+1$, and $d(u,y)=k+2$, we conclude that $d(u,w)=k+1$. By (TC)
  there exists a vertex $r\sim z,w$ at distance $k$ to $u$. To avoid a
  propeller $K_{2,3}^+$ induced by $r,x,y,z,w$, the vertices $r$ and $x$
  must be adjacent. We distinguish two subcases.

  \begin{subcase-kkk+1}
    $t\sim r$.
  \end{subcase-kkk+1}

  Since $d(u,t)=d(u,r)=k$, by (TC) there exists a vertex $p\sim t,r$ at
  distance $k-1$ from $u$. From the values of the distances from $u$ to the
  vertices $x,z,w,y$ we conclude that $p\nsim x,z,w$ and
  $d(p,y)=3$. Consequently, $p,t,r,x,z,w,y$ induce $R_1$ as an isometric
  subgraph.
  
  \begin{subcase-kkk+1}
    $t\nsim r$.
  \end{subcase-kkk+1}

  In this case we have $t,r\in I(u,z)$ (also $t,r\in I(u,x)$), thus by (QC)
  there exists a vertex $p\sim t,r$ at distance $k-1$ to $u$. Analogously
  to the previous subcase, from the values of distances from $u$ to the
  vertices $x,z,w,y$ we conclude that $p\nsim x,z,w$ and
  $d(p,y)=3$. Consequently, the vertices $p,t,r,x,z,w,y$ induce the
  forbidden graph $R_2$ as an isometric subgraph. This contradiction
  concludes the analysis of this subcase and of Case 2.

  In summary, the assumption that there exists a sextet $u,v,x,y,z,w$ such
  that $x,y\in v|u, d(x,y)=2$, and $z,w\in I(x,y)$ with $z\in v|u$ and
  $w\notin v|u$ always leads us to a contradiction, i.e., such a sextet
  does not exist. Thus the distance point-shadow $v|u$ is locally-convex,
  and thus is convex. Consequently, any weakly modular graph $G$ not
  containing the configuration $\mathcal{C}$ and an isometric $K_{2,3}$,
  $K^+_{2,3}$, $W^-_4$, $R_1$, and $R_2$ is smooth.
\end{proof}

\subsection{Proof of Theorem \ref{t:wm-strongly-smooth}}

We noted above that $W^-_4$ is not strongly smooth and that the graphs
$K_{2,3}$, $K^+_{2,3}$, and $R_1$ are not smooth. Since each strongly
smooth graph is smooth by Lemma~\ref{strongly-smooth->smooth}, $K_{2,3}$,
$K^+_{2,3}$, and $R_1$ are not strongly smooth either. If one of these four
graphs occurs as an isometric subgraph of a weakly modular graph $G$, then
the graph $G$ cannot be strongly smooth. Indeed, if $G$ contains $W^-_4$,
$K_{2,3}$, or $R_1$, $G$ will contain a non-convex interval and
therefore cannot be strongly smooth by Lemma
\ref{strongly-smooth->smooth}. If $G$ contains the propeller $K^+_{2,3}$,
then the shadow $v/u$ is not convex since $x,y\in v/u$ and $w\in I(x,y),$
however $\conv(u,w)=\{ u,w\}$ does not contain $v$.

Conversely, suppose that $G$ is a weakly modular graph not containing
$W^-_4$, $K_{2,3}$, $K^+_{2,3}$, and $R_1$ as isometric subgraphs. First we
prove two properties of such graphs. Our proofs closely follow the proofs
of more general formulations for meshed graphs from \cite{ChS3}.

\begin{claim-sswm}[Intervals are convex]
  \label{convex-intervals}
  $G$ is a graph with convex intervals.
\end{claim-sswm}

\begin{proof}
  Suppose by way of contradiction that $G$ has non-convex intervals. Let
  $I(u,v)$ be a non-convex interval minimizing $k=d(u,v)$.  Since $I(u,v)$
  is connected, by Lemma \ref{thm:local-convex->global-convex} it follows
  that $I(u,v)$ is not locally convex. Then there exist two vertices
  $x,y\in I(u,v)$ at distance 2 having a common neighbor $z\notin
  I(u,v)$. From the minimality choice of $k=d(u,v)$, it follows that
  $I(u,x)\cap I(u,y)=\{ u\}$ and $I(v,x)\cap I(v,y)=\{v\}$. Therefore, all
  quasi-medians of the triplets $u,x,y$ and $v,x,y$ have the form $ux'y'$
  and $vx''y''$, respectively.  Since $d(x',y'),d(x'',y'')\le d(x,y)\le 2$
  and metric triangles $ux'y'$ and $vx''y''$ are strongly equilateral
  (Lemma \ref{strongly-equilateral}), we conclude that
  $\max\{ d(u,x),d(u,y),d(v,x),d(v,y)\}\le 2$. Thus $d(u,v)\le 4$.

  \begin{case-int}
    $d(u,v)=4$.
  \end{case-int}

  Then $uxy$ and $vxy$ are metric triangles with side length
  2. Since these metric triangles are strongly equilateral and
  $z\in I(x,y)$, we conclude that $d(u,z)=2=d(v,z).$ Consequently,
  $z\in I(u,v)$, contrary to the choice of $z$. So, $d(u,v)\le 3$. If
  $d(u,v)=2$, then $u,v,x,y,z$ induce one of the forbidden graphs
  $K_{2,3}$ or $W^-_4$ depending of whether or not $z$ is adjacent to one
  of $u$ or $v$ (if $z\sim u,v$, then $z\in I(u,v)$, a
    contradiction). Thus $d(u,v)=3$.

  \begin{case-int}
    $u$ is adjacent to $x$ and $y$.
  \end{case-int}

  Then $x,y\in I(u,v)$ and by (QC) there exists $w\sim x,y,v$. Then
  $w\in I(v,x)\cap I(v,y)$, contrary to the assumption that
  $I(v,x)\cap I(v,y)=\{ v\}$.  Analogously, $v$ cannot be adjacent to both
  $x$ and $y$. So, further we assume that $d(u,v)=3$ and $x$ and $y$ are
  not both adjacent to $u$ or to $v$.
  
  \begin{case-int}
    $u\sim x$ and $v\sim y$.
  \end{case-int}

  In this case, any quasi-median of the triplet $u,x,y$ is a metric
  triangle with side length 1 of the form $uxy'$, where $y'\sim
  y$. Analogously, any quasi-median of the triplet $v,x,y$ is a metric
  triangle $vx''y$ with side length 1 with $x''\sim x$. Since $d(u,v)=3$,
  $y'\ne x''$. Since $y',x''\in I(u,v),$ $z\ne y',x''$. To avoid a
  $K_{2,3}$ or $W^-_4$ induced by $x,y,y',x'',z$, at least two of the pairs
  $\{y',z\}, \{y',x''\}, \{z,x''\}$ must be edges of $G$. If $y',x'',z$ are
  pairwise adjacent, then $u,v,x,y,x'',y',z$ induce the forbidden $R_1$ as
  an isometric subgraph. If $y'\sim z,x''$ and $z\nsim x''$, then
  $u,x,y',x'',z$ induce the forbidden propeller $K^+_{2,3}$ (if
  $x''\sim y',z$ and $y'\nsim z$, then $K^+_{2,3}$ is induced by
  $y',z,v,x',y$). Finally suppose that $z\sim y',x''$ and $y'\nsim x''$. By
  (TC) there exists $t\sim x,y',v$. To avoid a $K^+_{2,3}$ induced by
  $u,x,y',z,t$, $t$ must be adjacent to $u$ or to $z$. Since $d(u,v)=3,$
  $t$ cannot be adjacent to $u$. Hence $t\sim z$. If $t\nsim y$, then
  $y',z,t,y,v$ induce the forbidden $W^-_4$. Thus $t\sim y$. Then the
  vertices $u,x,y',z,t,v$ induce a forbidden isometric $R_1$. This
  concludes the proof of convexity of the intervals of $G$.
\end{proof}

\begin{claim-sswm}[Positioning condition]
  \label{poscond}
  $G$ satisfies the positioning condition (PC).
\end{claim-sswm}

\begin{proof} Let $b$ be any vertex and $S=v_1v_2v_3v_4$ be any square of
  $G$. We proceed by induction on the distance sum
  $\sigma(v_1,v_2,v_3,v_4,b)=d(b,v_1)+d(b,v_2)+d(b,v_3)+d(b,v_4)$. Suppose
  by way of contradiction that $d(b,v_1)+d(b,v_3)\ne
  d(b,v_2)+d(b,v_4)$. Then $S$ is located in one of the three positions:
  (L4), (L5), or (L6). We consider the following cases.

  \begin{case-pos}
    $d(b,v_1)=d(v,v_3)=k+1$ and $d(b,v_2)=d(b,v_4)=k$ (position (L6)).
  \end{case-pos}

  By (QC) there exists $s\sim v_2,v_4$ at distance $k-1$ from $b$. Then
  $s\nsim v_1,v_3$ and $s,v_1,v_2,v_3,v_4$ induce the forbidden $K_{2,3}$.

  \begin{case-pos}
    $d(b,v_1)=d(b,v_2)=d(b,v_4)=k$ and $d(b,v_3)=k+1$
    (position (L4)).
  \end{case-pos}

  By (QC), there exists a vertex $s\sim v_2,v_4$ at distance $k-1$ from $b$
  (and thus not adjacent to $v_3$). If $s\nsim v_1$, then
  $v_1,v_2,v_3,v_4,s$ induce a forbidden $K_{2,3}$ and if $s\sim v_1$, then
  the same vertices induce the forbidden $W^-_4$.

  \begin{case-pos}  $d(b,v_1)=k$ and $d(b,v_2)=d(b,v_3)=d(b,v_4)=k+1$
    (position (L5)).
  \end{case-pos}
  Since $v_1\nsim v_3$, we have $b\ne v_1$ and $k>0$.  By (TC) there exists
  $s\sim v_2,v_3$ at distance $k$ from $b$. If $s\sim v_1$, then to avoid
  that $s,v_1,v_2,v_3,v_4$ induce a $W^-_4$, we must have $s\sim v_4$.
  Analogously, if $s\sim v_4$ and $s\nsim v_1$, then the vertices
  $s,v_1,v_2,v_3,v_4$ induce the a forbidden $W^-_4$. Therefore $s$ is
  either adjacent to both $v_1$ and $v_4$ or to neither of these vertices.
  First suppose that $s\sim v_1,v_4$. Since $d(b,s)=d(b,v_1)=k$, by (TC)
  there exists $t\sim s,v_1$ at distance $k-1$ from $b$. Then
  $t,s,v_1,v_2,v_4$ induce the forbidden propeller $K^+_{2,3}$.

  Now, let $s\nsim v_1,v_4$. Since $s,v_1\in I(b,v_2)$, by (QC) there
  exists $z\sim v_1,s$ at distance $k-1$ from $b$. Then $z\nsim v_4$ and
  the square $v_2szv_1$ and the vertex $v_4$ violate (PC) because
  $d(v_4,v_1)+d(v_4,s)=3$ and $d(v_4,v_2)+d(v_4,z)=4$. If $b\ne z$, then
  $\sigma(v_2,s,z,v_1,v_4)<\sigma(v_1,v_2,v_3,v_4,b)$ and we obtain a
  contradiction with the minimality choice of $b$ and
  $v_1,v_2v_3v_4$. Consequently, $b=z$, $k=1$, and
  $\sigma(v_1,v_2,v_3,v_4,b)=7$. Since $d(s,v_1)=d(s,v_4)=2$, by (TC) there
  exists $w\sim s,v_1,v_4$. Since $v_2\nsim b$, to avoid a forbidden
  $K_{2,3}$ or $W^-_4$ induced by $z,w,v_2,s,v_1$, $w$ must be adjacent to
  $z$ and $v_2$. But then the vertices $b,v_2,v_4,w,v_1$ induce the
  forbidden $K^+_{2,3}$.  This concludes the analysis of the last case and
  of the proof that $G$ satisfies (PC).
\end{proof}

We are ready to complete the proof of Theorem \ref{t:wm-strongly-smooth}.

\begin{claim-sswm}[Strong smoothness]
  \label{strongly-smooth}
  $G$ is strongly smooth.
\end{claim-sswm}

\begin{proof}
  By Proposition \ref{convex-intervals}, $G$ has convex intervals, thus
  $v/u=v|u$ for any $u,v\in V$. Therefore, it suffices to show that
  all distance point-shadows $v|u$ of $G$ are convex, i.e., that $G$ is
  smooth. Since $G$ does not contain an induced $K_{2,3}$, $K^+_{2,3}$, or
  $R_1$, by Theorem \ref{t:wm-smooth} it suffices to show that $G$ does not
  contain the configuration $\mathcal{C}$ or an isometric $R_2$. Since
  $R_2$ contains a forbidden $W^-_4$, we conclude that $G$ also does not
  contain an isometric $R_2$.
  
  Suppose that $G$ contains the configuration $\mathcal{C}$, i.e., five
  vertices $u,v,x,y,w$ such that
  $u\sim v, v\sim x,y, u\nsim x,y, x\nsim y, w\sim x,y$ and
  $d(u,w)=d(v,w)=2$. Since $d(u,x)=d(u,w)=2$ and $d(u,y)=d(u,w)=2$, by (TC)
  there exist vertices $s$ and $t$ such that $s\sim u,x,w$ and
  $t\sim u,y,w$.  If $s=t$, then the vertices $u,v,s,x,y$ induce the
  forbidden $K_{2,3}$ if $s\nsim v$, and the forbidden $K_{2,3}^+$ if
  $s\sim v$. Therefore, $s\ne t$. This implies $s\nsim y$ and $t\nsim
  x$. If the vertices $s$ and $t$ are not adjacent, then necessarily $v$ is
  adjacent at least to one of the vertices $s,t$, otherwise $v$ and the
  square $uswt$ violate the positioning condition (PC), contrary to
  Proposition \ref{poscond}. If say $s\sim v$, then the vertices
  $s,x,v,w,y$ induce the forbidden $W^-_4$. So suppose that $s\sim t$. If
  $s\nsim v$, then the vertex $y$ and the square $usxv$ violate (PC)
  because $d(y,s)+d(y,v)=3$ and $d(y,u)+d(y,x)=4$. Hence $v\sim
  s$. Consequently, the vertices $s,v,x,y,w$ induce the forbidden
  $W^-_4$. This shows that $G$ cannot contain the configuration
  $\mathcal{C}$. Hence $G$ is smooth and therefore strongly smooth.
\end{proof}

\subsection{Proof of Theorem \ref{t:wb-smooth-strongly-smooth}}
The implication (1)$\Rightarrow$(2) follows from Lemma
\ref{strongly-smooth->smooth}. If $G$ is smooth, then by Theorem
\ref{t:wm-smooth} $G$ does not contain $K_{2,3}^+$ and $R_1$ as isometric
subgraphs, thus (2)$\Rightarrow$(3). Finally, suppose that a weakly bridged
graph $G$ does not contain $K_{2,3}^+$ and $R_1$ as isometric
subgraphs. Since $G$ does not contain induced 4-cycles, $G$ does not
contain $K_{2,3}$, $W^-_4$, and $R_2$ as induced (and thus as isometric)
subgraphs. Consequently, $G$ is a weakly modular graph not containing
$K_{2,3}$, $K^+_{2,3}$, $W^-_4$, $R_1,$ and $R_2$ as isometric
subgraphs. By Theorem \ref{t:wm-strongly-smooth}, $G$ is strongly smooth,
yielding (3)$\Rightarrow$(1). This concludes the proof of Theorem
\ref{t:wb-smooth-strongly-smooth}.

\section{On prime strongly smooth weakly modular graphs} 

Bre{\v{s}}ar et al.\ \cite{BrChNSScSt} proved that smooth graphs are closed
under Cartesian products and gated amalgamations (this also holds for
strongly smooth graphs) and asked what the prime smooth weakly modular
graphs are.  In this section, we give an answer to this question for the
case of prime strongly smooth weakly modular graphs.

Before presenting the result, we recall some facts about primes for other
classes of weakly modular graphs. A classical fact about median graphs is a
result of Isbell \cite{Is} that any finite median graph can be obtained
from hypercubes (Cartesian products of edges) by a sequence of gated
amalgamations. Hence $K_2$ (an edge) is the only prime median
graph. Analogously, the complete graphs $K_n$, $n\ge 2$ are the only prime
quasi-median graphs \cite{BaMuWi}.  A similar result was obtained in
\cite{BaCh_weak} for weakly median graphs (weakly modular graphs in which
each triplet of vertices has a unique quasi-median): the prime weakly
median graphs are the 5-wheel $W_5$, the octahedra $O_d$, and their
2-connected subgraphs, and 2-connected $K_4$-free plane bridged
triangulations. Another class for which the primes are known is the class
of bucolic graphs (a common generalization of median and weakly bridged
graphs) \cite{BrChChGoOs}: the prime bucolic graphs are the 2-connected
weakly bridged graphs.  Generalizing the techniques of
\cite{BaCh_weak,BaMuWi}, Chastand \cite{Cha1,Cha2} introduced pre-median
graphs (which generalizes median, quasi-median, weakly median, and bucolic
graphs) and showed that for pre-median graphs this kind of decomposition
theorems into primes hold.

Chalopin et al.\ \cite{CCHO} provided the following characterization of
prime pre-median graphs:
\begin{theorem} \upshape{\cite{CCHO}}
  \label{prime-pre-median}
  For a 2-connected pre-median graph $G$, the following conditions are
  equivalent:
  \begin{itemize}
  \item[(i)] $G$ is prime;
  \item[(ii)] each square $C_4$ of $G$ is included in a 4-wheel $W_4$ or in
    the graph $M_4$ (see \cite[Fig.4.1]{CCHO});
  \item[(iii)] the clique complex of $G$ is simply connected\footnote{The
      clique complex of $G$ is \emph{simply connected} if each cycle of $G$
      is null-homotopic with respect to the triangles.}
  \end{itemize}
\end{theorem}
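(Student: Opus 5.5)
Theorem~\ref{prime-pre-median} is quoted from \cite{CCHO}, so the plan is to reconstruct its argument, using the decomposition theory of Chastand \cite{Cha1,Cha2} for pre-median graphs. The guiding principle is that in a pre-median graph $G$ the two ways of being non-prime are detected locally. Gated amalgams show up as cut vertices. Cartesian products show up as squares that are not ``filled'' by a wheel-like configuration. The key tool is the relation $\Theta$ (or a refinement of Djokovi\'c--Winkler type) on edges: two edges are related if they are opposite in a square or equal. One shows, using (TC), (QC), and the absence of induced $K_{2,3}$ and $W_4^-$, that its transitive closure behaves as in quasi-median graphs. Its classes then give either a product decomposition or a gated-amalgam decomposition.

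For (ii)$\Leftrightarrow$(iii), I would use the standard fact that in a weakly modular graph the cycle space modulo triangles is generated by triangles and squares. Indeed, the triangle and quadrangle conditions let one contract any cycle toward a basepoint using only triangles and squares, so the fundamental group of the clique complex is generated by the squares. Hence the clique complex is simply connected exactly when every square is null-homotopic through triangles. A square $C_4$ contained in a $W_4$ (a central vertex adjacent to all four) or in $M_4$ is visibly filled by triangles, which gives (ii)$\Rightarrow$(iii). For the converse, take a square that is null-homotopic and analyze a minimal triangulated disc filling it. In the absence of $K_{2,3}$ and $W_4^-$, together with weak modularity, one can locally find a vertex adjacent to at least three of the square's vertices. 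Excluding $W_4^-$ forces either a full $W_4$ or the $M_4$ pattern. This step is a minimal-disc (van Kampen diagram) argument and I expect it to be the technical core.

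For (i)$\Leftrightarrow$(ii): if some square $C_4$ is not in a $W_4$ or $M_4$, consider the $\Theta$-class of one of its edges. I would show that this class, being closed under opposite edges in squares, splits $G$ as a Cartesian product, using the pre-median structure to get convex, gated fibres. Since $G$ is 2-connected, it is not a gated amalgam across a cut vertex, but a nontrivial product decomposition appears, so $G$ is not prime. Conversely, if every square lies in a $W_4$ or $M_4$, then opposite edges of each square are linked through triangles. From this, $\Theta$ (defined via triangles and squares) becomes a single class. A single class is incompatible with a nontrivial product, since product edges in different factors are never $\Theta$-related. It is also incompatible with a proper gated amalgam, because 2-connectivity plus the filled squares prevents a proper gated subgraph whose complement side is attached only through its gate structure. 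Thus $G$ is prime.

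The main obstacle is the converse direction (iii)$\Rightarrow$(ii), together with the claim that non-filled squares yield a genuine product splitting. Both rely on the fine structure of pre-median graphs from \cite{Cha1,Cha2,CCHO}. I would first try to prove the product splitting by showing that the union of the $\Theta$-class of an unfilled square edge and its complement define two gated fibres through every vertex.
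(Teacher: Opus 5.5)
There is no proof in the paper to compare with, since the statement is quoted from \cite{CCHO}, so I assess your plan on its own.

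Two parts of it are sound. Your (ii)$\Rightarrow$(iii) step is fine. The single-class idea for (ii)$\Rightarrow$(i) can be completed if you use only the triangle relation:
\begin{itemize}
\item Under (ii), take a cycle through two edges at a vertex, which exists by 2-connectivity.
\item Fill the cycle by a disc diagram of triangles and squares, and replace each square by its $W_4$- or $M_4$-triangulation. The fan of triangles at that vertex then links the two edges, so all edges fall into one class.
\item In a product, triangles stay inside fibres.
\item In a gated amalgam along $G_0$, no vertex outside $G_0$ has two neighbours in $G_0$. Hence no triangle contains both an edge of $G_0$ and an edge leaving $G_0$.
\end{itemize}
The two directions that end in (ii), which carry the content of the theorem, do not work as proposed.

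\emph{(iii)$\Rightarrow$(ii).} Your local disc argument proves too much. Without induced $W^-_4$, a vertex adjacent to three corners of a square is adjacent to the fourth. So your argument would put every null-homotopic square into a $W_4$ and make $M_4$ superfluous, but it is not superfluous.

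A counterexample: take a square $u_1u_2u_3u_4$ and a clique $a_1a_2a_3a_4$, and join $a_i$ to $u_i$ and $u_{i+1}$ (indices mod $4$).
\begin{itemize}
\item The graph has diameter $2$, so (QC) is trivial.
\item (TC) holds. Up to the rotation $i\mapsto i+1$, the only nontrivial instances are $u_1$ with the edges $u_3a_2,u_3a_3,a_2a_3$, settled by $u_2,u_4,a_1$, and $a_1$ with $u_3u_4$, settled by $a_3$.
\item No vertex sees exactly three, or exactly two opposite, corners of any of its five induced squares. So it is a $2$-connected pre-median graph.
\item Its clique complex is simply connected: the four other squares are coned off by the $a_i$, and $u_1u_2u_3u_4$ is filled by triangles through the $a_i$.
\end{itemize}
Yet no vertex is adjacent to three corners of $u_1u_2u_3u_4$. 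More basically, simple connectivity only gives \emph{some} filling disc, of a priori unbounded size. That a small filling exists is a global fact, not something an analysis near the square can produce.

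A workable route is (iii)$\Rightarrow$(i)$\Rightarrow$(ii), and the first implication is elementary.
\begin{itemize}
\item If $G$ is a nontrivial product, $0/1$-valued maps on the two factors give a simplicial map from the clique complex of $G$ to the cycle $C_4$. Some square of $G$ goes to a loop of degree one.
\item If $G$ is a $2$-connected gated amalgam along $G_0$, then $|G_0|\ge 2$ and some component of $G-G_0$ attaches to two distinct vertices $a',b'$ of $G_0$.
\item In that case, pair the gate map to $G_0$ with the indicator of $G_0$, and compose with a $0/1$-map on $G_0$ separating $a'$ from $b'$. This is again a simplicial map to $C_4$.
\item The cycle from $a'$ through that component to $b'$ and back inside $G_0$ goes to a loop of degree one, so the clique complex is not simply connected.
\end{itemize}

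\emph{(i)$\Rightarrow$(ii).} Your contrapositive rests on two false claims.

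First, the relation ``opposite in a square'' does not behave as in quasi-median graphs. In the octahedron $K_{2,2,2}$, which is prime and pre-median, its classes are just pairs of opposite edges and induce no splitting.

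Second, an unfilled square does not force a Cartesian product. Take the union of three unit squares of $\mathbb{Z}^2$ forming an L. It is median (hence pre-median), $2$-connected and triangle-free. It is not a product: five of its eight vertices have degree $2$, whereas in $K_2\times H$ every degree occurs an even number of times. It is non-prime only because it is the gated amalgam of a domino and a square along an edge. $2$-connectivity excludes amalgams along a cut vertex, not along larger gated subgraphs.

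What is needed is Chastand's theory of fiber-complemented graphs \cite{Cha1,Cha2}. By it, a (finite) pre-median graph is prime exactly when it has no proper gated subgraph with at least two vertices. You would then have to show that a square lying in no $W_4$ and no $M_4$ produces such a subgraph, for instance the gated hull of one of its edges. That is the real core of the theorem, and the proposal does not supply it.
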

This result shows that the prime pre-median graphs are quite general.

Smooth weakly modular graphs are in general not pre-median, and hence the
results of \cite{Cha1,Cha2} and \cite{CCHO} do not apply to them. Strongly
smooth weakly median graphs, on the other hand, are pre-median
graphs. Therefore, combining Theorem \ref{prime-pre-median} of \cite{CCHO}
and our Theorem \ref{t:wm-strongly-smooth} we obtain the following
characterization of such graphs:
\begin{corollary}
  \label{prime-ssmooth}
  For a 2-connected weakly modular graph $G$ the following conditions are
  equivalent:
  \begin{itemize}
  \item[(i)] $G$ is a prime strongly smooth graph;
  \item[(ii)] each square $C_4$ of $G$ is included in $W_4$ or in $M_4$ and
    $G$ does not contain $K_{2,3},W^-_4,K^+_{2,3}$, and $R_1$ as isometric
    subgraphs;
  \item[(iii)] the clique complex of $G$ is simply connected and $G$ does
    not contain $K_{2,3},W^-_4,K^+_{2,3}$, and $R_1$ as isometric
    subgraphs.
  \end{itemize}
\end{corollary}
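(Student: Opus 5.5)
The plan is to combine Theorem~\ref{t:wm-strongly-smooth} with Theorem~\ref{prime-pre-median}, using the observation that strongly smooth weakly modular graphs are pre-median. First, let $G$ be a 2-connected weakly modular graph. If $G$ is strongly smooth, Theorem~\ref{t:wm-strongly-smooth} says $G$ has no isometric $K_{2,3}$, $W^-_4$, $K^+_{2,3}$, or $R_1$. Both $K_{2,3}$ and $W^-_4$ have diameter $2$, so an induced copy of either in $G$ is automatically isometric. Hence $G$ contains neither as an induced subgraph, i.e., $G$ is pre-median. Conversely, if $G$ has no isometric copies of these four graphs, the same argument shows $G$ is pre-median, and Theorem~\ref{t:wm-strongly-smooth} shows it is strongly smooth. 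So, under either hypothesis, $G$ is a 2-connected pre-median graph and Theorem~\ref{prime-pre-median} applies.

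Next I handle the primality notion. Primality in (i) means "prime" in the sense defined in the preliminaries: $G$ is neither a Cartesian product nor a gated amalgam of smaller graphs. This is the notion used in Theorem~\ref{prime-pre-median}. One subtlety is that "prime strongly smooth graph" might instead be read relative to the class, i.e., not decomposable into smaller strongly smooth graphs. I would check that the two readings coincide. Gated subgraphs of a strongly smooth graph are convex, hence isometric and weakly modular, and they contain no isometric forbidden subgraphs. By Theorem~\ref{t:wm-strongly-smooth} they are therefore strongly smooth. Factors of a Cartesian product embed as convex (indeed gated) subgraphs, so they are strongly smooth by the same argument. Thus any decomposition of $G$ automatically stays inside the class, and the two notions agree.

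With this in place, the equivalences follow. For (i)$\Rightarrow$(ii),(iii): $G$ is strongly smooth, hence pre-median as shown above, so Theorem~\ref{prime-pre-median} (i)$\Rightarrow$(ii),(iii) gives the square condition and simple connectivity, and Theorem~\ref{t:wm-strongly-smooth} gives the forbidden-subgraph condition. For (ii)$\Rightarrow$(i) and (iii)$\Rightarrow$(i): the forbidden-subgraph part makes $G$ pre-median and strongly smooth. Theorem~\ref{prime-pre-median} then turns the square condition, or simple connectivity of the clique complex, into primality.

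I expect the main obstacle to be the reduction from isometric to induced copies of $K_{2,3}$ and $W^-_4$, together with checking the hypotheses of Theorem~\ref{prime-pre-median} (2-connectivity and pre-medianity). Both are short once the diameter-$2$ remark is made; everything else is direct citation.
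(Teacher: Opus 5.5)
Your proposal is correct and follows the paper's argument. The paper simply observes that strongly smooth weakly modular graphs are pre-median and combines Theorem~\ref{prime-pre-median} with Theorem~\ref{t:wm-strongly-smooth}; you supply the justification it leaves implicit, namely that induced copies of $K_{2,3}$ and $W^-_4$ are isometric because these graphs have diameter $2$. Your additional check that gated subgraphs and Cartesian factors remain weakly modular and strongly smooth, so that class-relative primality agrees with the absolute notion, is not in the paper but is sound and harmless.
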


We conclude with several examples of prime strongly smooth weakly modular
graphs. First, the prime weakly median graphs are prime strongly
smooth. Recall that they are: (1) \emph{the 5-wheel $W_5$, the octahedra
  $O_d$ and their 2-connected subgraphs (in particular, cliques), and the
  2-connected $K_4$-free plane bridged triangulations}. It was shown in
\cite[Proposition 4.16]{CCHO} that the following graphs are prime
pre-median: half-cubes, Johnson graphs, the Schl\"{a}fli graph, and the
Gosset graph (for definitions, see \cite{CCHO}). Half-cubes and Johnson
graphs do not contain propellers $K^+_{2,3}$ since they are basis graphs of
even $\Delta$-matroids and matroids \cite{Ch_delta,Mau} and their intervals
are convex \cite{Ch_delta}. Therefore, half-cubes and Johnson graphs do not
contain $R_1$ and $R_2$. Consequently, (2) \emph{half-cubes and Jonhson
  graphs} are strongly smooth and prime. One can also check that (3)
\emph{the Schl\"{a}fli graph and the Gosset graph} are also strongly smooth
and prime.  Another class consists of (4) \emph{the 2-connected weakly
  bridged graphs not containing isometric $K_{2,3}^+$ and $R_1$} (by
Theorem \ref{t:wb-smooth-strongly-smooth} they are strongly smooth and they
are prime because they clique complexes are simply connected
\cite{Ch_CAT,ChOs,JaSw}).

Yet another class of prime strongly smooth graphs comes from the
2-dimensional king grid ${\mathbb Z}^2_{\infty}$: (5) \emph{any 2-connected
  subgraph $G$ of ${\mathbb Z}^2_{\infty}$ whose clique complex is simply
  connected} can be shown to be a Helly graph (this follows from the
local-to-global characterization of Helly graphs of \cite{CCHO}), and thus
$G$ is weakly modular. One can directly check that $G$ does not contain the
graphs $W^-_4$, $K_{2,3}$, $K_{2,3}^+$, and $R_1$ as isometric subgraphs,
thus $G$ is smooth. Geometrically, one can view $G$ as the subgraph of
${\mathbb Z}^2_{\infty}$ induced by the vertices lying inside the region of
${\mathbb R}^2$ bounded by a simple non-crossing cycle of
${\mathbb Z}^2_{\infty}$.  However this construction does not work in
higher dimensions: it is well known that each graph can be isometrically
embedded into some king grid ${\mathbb Z}^r_{\infty}$, therefore for enough
large $r$, ${\mathbb Z}^r_{\infty}$ contains the non-smooth graphs $W^-_4$,
$K_{2,3}$, $K_{2,3}^+$, and $R_1$ as isometric subgraphs (already
${\mathbb Z}^4_{\infty}$ contains the first three graphs on 5
vertices). This also shows that Helly graphs (an important class of weakly
modular graphs having simply connected clique complexes) are not smooth or
strongly smooth.

\section*{Acknowledgments}
Victor Chepoi was partially supported by the ANR project MIMETIQUE
(ANR-25-CE48-4089-01). He would like to acknowledge the hospitality of the
Bioinformatics Group during his visit of Leipzig in April, 2026, when this
work was started.

\bibliographystyle{abbrv}
\bibliography{bibliography}

\end{document}